\newtheorem{theorem}{Theorem}[section]
\newtheorem{lemma}[theorem]{Lemma}
\newtheorem{proposition}[theorem]{Proposition}
\newtheorem{definition}[theorem]{Definition}
\newtheorem{remark}[theorem]{Remark}
\newtheorem{example}[theorem]{Example}
\newtheorem{assumption}[theorem]{Assumption}
\newcommand{\bD}{\mathbb{D}}
\newcommand{\bE}{\mathbb{E}}
\newcommand{\bF}{\mathbb{F}}
\newcommand{\bN}{\mathbb{N}}
\newcommand{\bP}{\mathbb{P}}
\newcommand{\bR}{\mathbb{R}}
\newcommand{\bS}{\mathbb{S}}
\newcommand{\bT}{\mathbb{T}}
\newcommand{\cB}{\mathcal{B}}
\newcommand{\cC}{\mathcal{C}}
\newcommand{\cF}{\mathcal{F}}
\newcommand{\cH}{\mathcal{H}}
\newcommand{\cP}{\mathcal{P}}
\newcommand{\cS}{\mathcal{S}}
\DeclareMathOperator*{\esssup}{ess\,sup}
\newcommand{\trace}{\textrm{Trace}}
\newcommand{\Supp}{\textrm{Supp}}
\definecolor{darkgreen}{rgb}{0,0.35,0}
\newcommand{\1}{\mathbbm{1}}
\newcommand{\dd}{d}
\newcounter{a}
\else\stepcounter{a}\fi
\begin{document}
\thispagestyle{empty}

\begin{center}
\Large
\textsc{Malliavin differentiability of McKean-Vlasov SDEs with locally Lipschitz coefficients}
\end{center}


\begin{center}
\textit{Gon\c calo dos Reis}\par
\begin{tabular}{c}
University of Edinburgh, School of Mathematics, Edinburgh,  UK\\
and\\
Center for Mathematics and Applications (NOVA Math),\\
NOVA School of Science and Technology (NOVA FCT), PT
\\
	e-mail: \texttt{G.dosReis@ed.ac.uk
	}
\end{tabular}
\end{center}


\begin{center}
\textit{Zac Wilde} \par
\begin{tabular}{c}
University of Edinburgh, School of Mathematics, Edinburgh, UK\\
	e-mail: \texttt{Z.J.Wilde@sms.ed.ac.uk
	}
\end{tabular}
\end{center}




\noindent
\textbf{Abstract:}  
In this short note, we establish Malliavin differentiability of McKean-Vlasov Stochastic Differential Equations (MV-SDEs) with drifts satisfying both a locally Lipschitz and a one-sided Lipschitz assumption, and where the diffusion coefficient is assumed to be uniformly Lipschitz in its variables.  

As a secondary contribution, we investigate how Malliavin differentiability transfers across the interacting particle system associated with the McKean-Vlasov equation to its limiting equation. This final result requires both spatial and measure differentiability of the coefficients and doubles as a standalone result of independent interest since the study of Malliavin derivatives of weakly interacting particle systems seems novel to the literature. The presentation is didactic and finishes with a discussion on mollification techniques for the Lions derivative. 

\medskip

\noindent\textbf{Keywords:} McKean-Vlasov SDEs, Malliavin differentiability, superlinear growth, interacting particle systems

\tableofcontents

\section{Introduction}
The main object of our study are McKean-Vlasov Stochastic Differential Equations (MV-SDE), also known as mean-field equations or distribution-dependent SDEs. They differ from standard SDEs by means of the presence of the law of the solution process in the coefficients. Namely
\begin{align*}
\dd Z_{t} = b(t,Z_{t}, \mu_t)\dd t + \sigma(t,Z_{t}, \mu_t)\dd W_{t},  \quad X_{0} =\xi,
\end{align*}
for some measurable coefficients, where $\mu_{t}=\textrm{Law}(Z_t)$ denotes the law of process $Z$ at time $t$ (with $W$ a Brownian motion; $\xi$ a random initial condition). Similar to standard SDEs, MV-SDEs are shown to be well-posed under a variety of frameworks, for instance, under  locally Lipschitz and super-linear growth conditions alongside random coefficients, see e.g. \cite{adamssalkeld2022LDPReflected} or \cite{DosReis2018-LDPforMVSDEs}. In this setting there are also many studies on their numerical approximation e.g. \cites{MR4367675,MR4413221,imanumdrad022}, ergodicity \cite{chen2023wellposedness} and large deviations \cites{DosReis2018-LDPforMVSDEs,adamssalkeld2022LDPReflected}.

Many mean-field models exhibit drift dynamics that include superlinear growth and non-global Lipschitz growth, for example, mean-field models for neuronal activity (e.g.~stochastic mean-field FitzHugh-Nagumo models or the network of Hodgkin-Huxley neurons) \cite{BaladronFasoliFaugerasEtAl2012}, \cite{Bolley2011}, \cite{BossyEtAl2015} appearing in biology or the physics of modelling batteries \cite{dreyer2011phase},\cite{DreyerFrizGajewskiEtAl2016}. Quoting \cite{gomes2019mean}, systems of weakly-interacting particles and their limiting processes, so-called McKean-Vlasov or mean-field equations appear in a wide variety of applications, ranging from plasma physics and galactic dynamics  to mathematical biology, the social sciences, active media, dynamical density functional theory (DDFT) and machine learning. They can also be used in models for co-operative behavior, opinion formation, risk management, as well as in algorithms for global optimization.

\smallskip 
\emph{Our 1st contribution: Malliavin differentiability of MV-SDEs under locally Lipschitz conditions.} We extend Malliavin variational results to McKean-Vlasov SDEs with locally Lipschitz drifts satisfying a so-called one-sided Lipschitz condition. The result is new to the best of our knowledge. Malliavin differentiability of MV-SDEs has been addressed in \cite{Crisan2018}*{Proposition 3.1} and \cite{RenWang2018}, and in both cases, their assumptions revolve around the differentiable Lipschitz case. Our proof methodology is inspired by that of \cite{Crisan2018} --  both there and here, the result is established by appealing to the celebrated \cite{nualart2006malliavin}*{Lemma 1.2.3}.

\emph{Our 2nd contribution: transfer of Malliavin differentiability across the particle system limit}.
Another large aspect of McKean-Vlasov SDE theory, is the study of the large weakly-interacting particle systems and their particle limit that recovers the MV-SDE in the limit. This latter limit result is called Propagation of Chaos \cite{Sznitman1991} (also \cite{Bolley2011},\cite{gomes2019mean}, \cite{chassagneux2014probabilistic}, \cite{adamssalkeld2022LDPReflected}). In the second part of this note, we study the Malliavin differentiability of the interacting particle system and how the Malliavin regularity transfers across the particle limit to the limiting equation. To the best of our knowledge, this particular proof methodology is new to the literature. 

From a methodological viewpoint, our point of attack is the \emph{projections over empirical measures} approach \cites{chassagneux2014probabilistic,book:CarmonaDelarue2018a,platonovdosreis2023ItowentzelLions}. This approach allows us to use the best available Malliavin differentiability results for standard (multidimensional) SDEs \cites{nualart2006malliavin,imkeller2018differentiability}, and then carry them to the MV-SDE setting via the particle limit using Propagation of Chaos and \cite{nualart2006malliavin}*{Lemma 1.2.3}. Our variational results are limited only by the SDE results we cite. If better results are found, one only needs to replace the reference in the appropriate place. 
Lastly, in relation to our 1st contribution, this 2nd contribution is established under a full global Lipschitz and differentiability (space and measure) assumption on the coefficients. 
\medskip

\textbf{Organization of the paper.} In section 2, we set notation and review a few concepts necessary for the main constructions. In section 3, we prove Malliavin differentiability of MV-SDEs under superlinear drift growth assumptions and in section 4 we prove Malliavin differentiability of MV-SDEs under the weaker global Lipschitz assumptions via the convergence of interacting particle systems, providing a lengthy remark about mollification in Wasserstein spaces.

\section{Notation and preliminary results}
\label{sec:two}

\subsection{Notation and Spaces}

For collections of vectors, let the upper indices denote the distinct vectors, whereas the lower index is a vector component, i.e. $x^l_j$ denote the $j$-th component of $l$-th vector. Let $\mathbf{x} = (x^1,\cdots, x^N)$ denote a vector in $\bR^{dN}$ where $x^i := (x^i_1,\cdots,x^i_d)$ for $i=1,\cdots ,N$. 
\color{black} 
For matrices $M$ and $N$ of agreeing dimensions, define the inner product $M:N = \trace(M^TN)$  and the norm induced by this inner product (the Hilbert-Schmidt norm) as $|M|=\sqrt{\trace(M^TM)}$. 

 For $g:\bR^m\to\bR^n$, writing $g(x) = (g^1(x),\cdots,g^n(x))$ and define $\nabla_x g$ as the Jacobian matrix 
$(g_{ij})_{\substack{1\leq i \leq m \\ 1\leq j\leq n}}$ 
where $g_{ij} =\partial_{x^i}g^j$. 
\color{black}

Take $T\in[0,\infty)$ and let $(\Omega, \bF, \cF, \bP)$ be a filtered probability space carrying a $m$-dimensional Brownian Motion on the interval $[0,T]$ and $\bF=(\cF_t)_{t\geq 0}$. The filtration is the one generated by the Brownian motion and augmented by the $\bP$-null sets, and with an additionally sufficiently rich sub $\sigma$-algebra $\cF_0$ independent of $W$. We denote by $\bE[\cdot]=\bE^\bP[\cdot]$ the usual expectation operator with respect to $\bP$.  

The space of probability measures on $\bR^d$ with finite second moment, $\cP_2(\bR^d)$ is Polish under the $2$-Wasserstein distance
\begin{align*}
W_2(\mu,\nu) = \inf_{\pi\in\Pi(\mu,\nu)} \Big(\int_{\bR^d\times \bR^d} |x-y|^2\pi(dx,dy)\Big)^\frac12, \quad \mu,\nu\in \cP_2(\bR^d),
\end{align*}   
where $\Pi(\mu,\nu)$ is the set of couplings for $\mu$ and $\nu$ such that $\pi\in\Pi(\mu,\nu)$ is a probability measure on $\bR^d\times \bR^d$ such that $\pi(\cdot\times \bR^d)=\mu$ and $\pi(\bR^d \times \cdot)=\nu$. 
Let $\Supp(\mu)$ denote the support of $\mu \in \cP(\bR^d)$.

Let $p\in[2,\infty)$. We introduce the following spaces.
\begin{itemize}
\item Let $\mathcal{X}$ be a metric space. We denote by $C(\mathcal{X})$ as the space of continuous functions $f:\mathcal{X} \to \bR$ endowed with the uniform norm and $C_b(\mathcal{X})$ its subspace of bounded functions endowed with the sup norm $\|f\|_\infty = \sup_{x\in \mathcal{X}}|f(x)|<	\infty$; 
For $k\in \bN$ denote $\cC^k(\bR^d)$ the space of $k$-times continuously differentiable functions from $\bR^d$ to $\bR^d$, equipped with a collection of seminorms $\{ \|g\|_{\cC^p(K)} := \sup_{x \in K} ( |g(x)| + \sum_{j=1}^k |\partial_x^j g(x)|), g\in \cC^k(\bR^d) \}$, indexed by the compact subsets $K \subset \bR^d$.

\item 
$L^{p}(\Omega):=L^{p}(\Omega, \cF_t, \bP; \bR^d)$ \index{$L^p$}, $t\in [0,T]$ is the space of $\bR^d$-valued $\cF_t$-measurable random variables $X:\Omega\to\bR^d$ with norm  $\|X\|_{L^p(\Omega)} = \bE[\, |X|^p]^{1/p} < \infty$.

\item  
$\cS^p([0,T]):=\cS^{p}([0,T], \bP;\bR^d)$ is the space of $\bR^d$-valued measurable $\bF$-adapted processes $(Y_t)_{t\in[0,T]}$ satisfying $\|Y \|_{\cS^p([0,T])} = \bE[\sup_{t\in[0,T]}|Y(t)|^p]^{1/p}
<\infty$. 
\end{itemize}

\subsection{Malliavin Calculus}
Let $\cH$ be a Hilbert space and $W:\cH \to L^2(\Omega)$ a Gaussian random variable. The space $W(\cH)$ endowed with an inner product $\langle W(h_1), W(h_2)\rangle = \bE[W(h_1) W(h_2)]$ is a Gaussian Hilbert space. \color{black}Let $C_p^\infty(\bR^n; \bR)$ be the space of infinitely differentiable functions $f:\bR^n \to \bR$ which have partial derivatives of all orders, each with polynomial growth. \color{black} Let $\bS$ be the collection of random variables $F:\Omega \to \bR$ such that for $n\in\bN$, $f\in C_p^\infty(\bR^n; \bR)$ and $h_i \in \cH$ can be written as $F = f(W(h_1), \ldots , W(h_n))$. Then we define the derivative of $F$ to be the $\cH$-valued random variable
\begin{align*}
    DF = \sum_{i=1}^n \partial_{x_i} f(W(h_1), \ldots, W(h_n)) h_i.
\end{align*} 

The Malliavin derivative from $L^p(\Omega)$ into $L^p(\Omega, \cH)$ is closable and the domain of the operator is defined to be $\bD^{1, p}$, defined to be the closure of the of the set $\bS$ with respect to the norm
\begin{align*}
    \|F\|_{1, p} = \Big[ \bE[ |F|^p] + \bE[ \|DF\|_{\cH}^p] \Big]^{\tfrac{1}{p}}. 
\end{align*}
\color{black}
We also define the directional Malliavin derivative $D^hF = \langle DF, h\rangle_\cH$ for any choice of $h\in\cH$.  For more details, see \cite{nualart2006malliavin}. 
\color{black}
\section{Malliavin differentiability under local Lipschitz assumptions}
\label{sec:MallDiffgeneral}

\subsection{McKean-Vlasov Equations with locally Lipschitz coefficients}

In this manuscript, we work with so-called McKean-Vlasov SDEs described by the following dynamics for  $0\leq t \leq T<\infty$,
\begin{align}
\label{Eq:General MVSDE}
\dd Z_{t} = b(t,Z_{t}, \mu_t)\dd t + \sum_{l=1}^m\sigma^l(t,Z_{t}, \mu_t)\dd W_{t}^l,  \quad Z_{0} =\xi,
\end{align}
where $\mu_{t}$ denotes the law of the process $Z$ at time $t$, i.e.~$\mu_t=\bP\circ Z_t^{-1}$ and $W^l$, $l=1,\ldots,m$ are $1$-dimensional independent Brownian motions. We write $W=(W^1,\ldots,W^m)$ as the corresponding $m-$dimensional Brownian motion. In this chapter, we work under the locally Lipschitz case as the below assumption describes.

\subsubsection{Assumptions}
\begin{assumption}
\label{ass:MKSDE-MainExistTheo}
Let $b:[0,T] \times \bR^d \times\cP_2(\bR^d) \to \bR^d$ and for $l = 1,\ldots,m$, $\sigma^l:[0,T] \times \bR^d \times \cP_2(\bR^d) \to \bR^{d}$. Then there $\exists L>0$ such that: 
\begin{enumerate}
\item \color{black}For some $p\geq 2$, $\xi$ is $\cF_0$-measurable and $\xi \in L^p(\Omega, \cF_0, \bP; \bR^d)$ .
\color{black}
\item $\sigma^l$ is continuous in time and Lipschitz in space-measure $\forall t \in[0,T]$, $\forall x, x'\in \bR^d$ and $\forall \mu, \mu' \in \mathcal{P}_2(\bR^d)$ we have
\begin{align*}
    |\sigma^l(t,  x, \mu) - \sigma^l(t , x', \mu')| \leq L\Big( |x-x'| +W_2(\mu, \mu')\Big).
\end{align*}
\item $b$ is continuous in time and satisfies the \emph{one-sided Lipschitz condition} in space and is Lipschitz in measure: $\forall t \in[0,T]$, $\forall x,x'\in \bR^d$ and $\forall \mu, \mu' \in \cP_2(\bR^d)$ we have that 
\begin{align*}
\big\langle x-x', b(t , x, \mu)-b(t, \omega, x', \mu)\big\rangle_{\bR^d} & \leq L|x-x'|^2,
\\
|b(t, x, \mu) - b(t , x, \mu')| & \leq L W_2 (\mu, \mu').
\end{align*}
\item $b$ is Locally Lipschitz: $\forall t \in[0,T]$, $\forall \mu \in \cP_2(\bR^d)$, $\forall x, x'\in\bR^d$ such that $|x|, |x'|<N$ we have that $\exists L_N>0$ such that 
\begin{align*}
    |b(t, x, \mu) - b(t , x', \mu) | \leq L_N |x-x'|.
\end{align*}
\end{enumerate}
\end{assumption}
Throughout, denote by $\sigma$, the $d\times m$ matrix with columns $(\sigma^1,\cdots, \sigma^{m})$.\\

Observe that time continuity is a sufficient condition for integrability of $b$ and $\sigma^l$ since we are working on a compact time interval. The above assumption ensures existence, uniqueness and related stability while the following will be used to ensure the differentiability results.

\begin{assumption} 
\label{Assumption:MalCalcMV}
Let Assumption \ref{ass:MKSDE-MainExistTheo} hold.  
For any $t\geq 0$, $\mu\in\cP(\bR^d)$ the maps $x\mapsto b(t,x,\mu)$ and $x\mapsto \sigma^l(t,x,\mu)$ are $C^1(\bR^d)$. The derivative maps are jointly continuous in their variables. 
\end{assumption}

\begin{remark}\label{remark:Extensions}
One recognises that the above assumptions can be weakened in a few ways. For instance, the Lipschitz constant can be a non-negative function of time $L_t$, under an  $L^1$-integrability condition: $\int_0^T L_s\dd s< \infty$. 
Further, the time continuity $t\mapsto b(t,0,\delta_0)$ can be exchanged for an integrability condition: $\int_0^T |b(s,0,\delta_0)|\dd s <\infty$, while the time-continuity of $t\mapsto \sigma(t,0,\delta_0)$ can be exchanged for a square-integrability condition: $\int_0^T|\sigma(s,0,\delta_0)|^2\dd s <\infty$. We leave these points open for the interested reader. 
\end{remark}

\subsubsection{Well-posedness and moment estimates}

The first result establishes well-posedness, moment estimates and continuity in time for the solution of \eqref{Eq:General MVSDE}. 
\begin{theorem}
    \label{theo:Wellposendess} 
Let Assumption \ref{ass:MKSDE-MainExistTheo} hold with some $p\geq 2$. Then, MV-SDE \eqref{Eq:General MVSDE} is well-posed and has a unique solution $Z\in \cS^p([0,T])$. Moreover, it satisfies 
\begin{align*}
    \mathbb{E}\left[\sup _{t \in[0, T]}|Z_t|^p\right]  
    \leq Ce^{CT}\bigg(\bE\big [\, |\xi|^p \big]
    +\bE\bigg[\bigg(
    &
    \int_0^T |b(s,0,\delta_0)|\dd s\bigg)^p\bigg]
    \\
    &
    +\bE\bigg[\bigg(\int_0^T |\sigma(s,0,\delta_0)|^2\dd s\bigg)^\frac{p}{2}\bigg]
    \bigg),
\end{align*}
for some positive constant $C$. Lastly, $Z$ has $\bP$-almost surely continuous paths and its law $[0,T]\ni t\mapsto \mu_t$ is continuous under the $W_2$-distance.
\end{theorem}
This result also yields estimates for standard SDEs (which have no measure dependency).
\begin{proof}
Well-posedness and the moment estimate follow from  \cite{adamssalkeld2022LDPReflected}*{Theorem 3.2} as their assumption (with $\mathcal{D}=\bR^d$, $x_0$ and deterministic continuous maps $b,\sigma$) subsumes our Assumption \ref{ass:MKSDE-MainExistTheo}.
The continuity of the sample paths of $Z$ and its law in $W_2$ is trivial.
\end{proof}

\subsection{Malliavin differentiability with locally Lipschitz coefficients}
\label{section:Mall Diff superlinear growth}

We state the first main result of this work, the Malliavin differentiability of the solution of \eqref{Eq:General MVSDE}. 

\begin{theorem}
\label{proposition:MallDiff-all-around}
Let $p\geq 2$. Let Assumption \ref{Assumption:MalCalcMV} hold. Denote by $Z$ the unique solution \eqref{Eq:General MVSDE} in $\cS^p([0,T])$. 
Then $Z$ is Malliavin differentiable, i.e. $Z\in \bD^{1,2}(\cS^2) \cap \bD^{1,p}(\cS^p)$, and the Malliavin derivative against $W$ satisfies for $0\leq s\leq t \leq T$,
\begin{align}
\label{eq:MVSDEMallDeriv}
D_s Z_t
= \sigma(s,Z_s, \mu_s)
&
+ \int_s^t (\nabla_x b)
(r, Z_r, \mu_r)
D_s Z_r dr \notag
\\
&
+ \sum_{l=1}^m\int_s^t (\nabla_x \sigma^l)(r, Z_r, \mu_r) D_s Z_r dW_r^l.
\end{align}
If $s >t$ then $D_s Z_t=0$ $\bP$-a.s. 

Moreover, we have 
\begin{align*}
\sup_{0\leq s\leq T} \|D_s Z \|_{\cS^p([0,T])}^p
\leq 
\sup_{0\leq s\leq T} \bE\Big[\sup_{0\leq t \leq T}|D_s Z_t|^p\Big]
&\leq C(1+\|Z\|_{\cS^p([0,T])}^p)
\\
&\leq C\big( 1+\|\xi\|_{L^p(\Omega)}^p \big)
 <\infty,
\end{align*}
\color{black}
and, reflecting that $DZ_t$ is a Hilbert space valued random variable we have   
\begin{align}
\label{eq:aux:HilbertSpaceDominatoin}
\bE\Big[\sup_{0\leq t \leq T} \big(\int_0^T |D_s Z_t|^2ds \Big)^{\frac p2} \Big]
&\leq C\big(1+\|\xi\|_{L^p(\Omega)}^p \big)
 <\infty.
\end{align}
\color{black}
\end{theorem}
This proof is inspired by that appearing in \cite{Crisan2018} but with critical differences to allow for the superlinear growth of the drift and the general Malliavin differentiability of \cite{imkeller2018differentiability}. 

We comment that using the proof methodology we present below, the result above can be extended in several ways -- we leave these as open questions. The first is to allow for random drift and diffusion coefficients: \cite{adamssalkeld2022LDPReflected}*{Theorem 3.2} provides well-posedness and moment estimates and concluding Malliavin differentiability via \cite{imkeller2018differentiability}*{Theorem 3.2 or Theorem 3.7}. 

Another open setting, with continuous deterministic coefficients, is to establish our Theorem \ref{proposition:MallDiff-all-around} with a drift map $b$ having super-linear growth in the measure component: for instance, by allowing convolution type measure dependencies. Lastly, the differentiability requirements for $b$ and $\sigma$ can be weakened via mollification; see \cite{imkeller2018differentiability}*{Remark 3.4}.

\begin{example}[Linear interaction kernels]
Take MV-SDE \eqref{Eq:General MVSDE} with solution $(Z_t,\mu_t)_{t\geq 0}$ under the assumptions of Theorem \ref{proposition:MallDiff-all-around} and let the drift function $b$ take a specific convolutional form. 
Concretely, let $b(t,x,\mu) = (\Tilde{b}* \mu )(x)= \int_{\bR^d}\Tilde{b}(x-y)\mu(\dd y)$ for some $\Tilde{b} \in C^1(\bR^d)$  and let $\sigma = \sigma_0 I_d$ for $\sigma_0\neq 0$ a constant. 

Then, we have for any $0\leq s\leq t \leq T$
\begin{align}
\label{mall example}
    D_sZ_t = \sigma_0 \exp\Big(\int_s^t(\nabla_x\tilde{b}*\mu_r)(Z_r)\dd r\Big).
\end{align}
We can in fact reduce the $C^1$ in space differentiability assumption to a Lipschitz one, exploiting \cite{nualart2006malliavin}*{Proposition 1.2.4}. That is, we can take a sequence of mollifiers $\Tilde{b}_n:=\Tilde{b}*\rho_n$ for a smoothing kernel $(\rho_n)_n$ such that   $C^\infty(\bR^d) \ni \rho_n (x)\rightarrow x$ uniformly, and the expression \eqref{mall example} still holds.
\medskip 

We can generalise the above form of the drift to linear interaction kernels of the form $b(t,x,\mu) = \int_{\bR^d}\widehat{b}(x,y)\mu(\dd y)$, where 
$\widehat{b}:\bR^d\times\bR^d\to \bR^d $ and $ x\mapsto \widehat{b}(x,\cdot)$ is Lipschitz uniformly to obtain for any $0\leq s\leq t \leq T$ 
\begin{align*}
    D_sZ_t = \sigma_0 \exp\Big(\int_s^t \Big\{  \int_{\bR^d} (\nabla_x\widehat {b})(Z_r,y) \mu_r (dy) \Big\}\dd r\Big).
\end{align*}
\end{example}

\begin{proof}[Proof of Theorem \ref{proposition:MallDiff-all-around}]
The Malliavin differentiability of \eqref{Eq:General MVSDE} is shown by appealing to \cite{nualart2006malliavin}*{Lemma 1.2.3}. 
One builds a convenient sequence of Picard iterations which converge to the McKean-Vlasov Equation and use \cite{nualart2006malliavin}*{Lemma 1.2.3} to ensure that the limit is also Malliavin differentiable in $\bD^{1,2}(\cS^2)$. Lastly, by showing that the Malliavin derivative of $Z$ is $\cS^p$-integrable, then  \cite{nualart2006malliavin}*{Proposition 1.5.5} yields that $Z\in \bD^{1,p}(\cS^p)$.

\emph{Step 1.0. The Picard sequence.} 
We start by defining a Picard sequence approximation for \eqref{Eq:General MVSDE}, namely set $Z^0_\cdot=\xi$ and $\mu^0_t= \bP\circ \xi^{-1}=\textrm{Law}(\xi)$ for any $t\geq 0$; we have $t\mapsto \mu^0_t$ is a $W_2$-continuous map. For any $n\geq 1$ define 
\begin{align}
\label{eq:StylizedDecoupledMVSDE}
\dd Z^{n+1}_{t} = b(t,Z^{n+1}_{t}, \mu^n_t)\dd t + \sum_{l=1}^m\sigma^l(t,Z^{n+1}_{t}, \mu^{n}_t)\dd W_{t}^l,  \quad Z^{n+1}_{0} =\xi.
\end{align}
 \eqref{eq:StylizedDecoupledMVSDE} is a standard SDE with added time dependence induced by $t\mapsto \mu^n_t$ with drift $b$ satisfying a one-sided Lipschitz condition (in space) and $\sigma$ uniformly Lipschitz (in space). \\

\emph{Step 1.1. Existence and uniqueness of $Z^n$.} 
Take $\mu^{n-1}$ such that $t\mapsto b(t,x,\mu^{n-1}_t)$ and $t\mapsto \sigma^l(t,x,\mu^{n-1}_t)$ are continuous, then (a slight variation of) Theorem \ref{theo:Wellposendess}, given Assumption \ref{ass:MKSDE-MainExistTheo}, yields the existence of a unique solution $Z^{n+1}\in \cS^2([0,T])$. 
Moreover, an easy variation of \cite{DosReis2018-LDPforMVSDEs}*{Proposition 3.4} yields that for $n\geq 0$, $t\mapsto \mu^{n+1}_t$ is continuous (in 2-Wasserstein distance) if $t\mapsto \mu^{n}_t$ is.  
We can conclude that  $\{Z^n\}_{n\geq 0}$ exists and is well defined. 

Using that $W_2(\delta_0 ,\mu^n_\cdot)^2\leq \bE[|Z^{n}_\cdot|^2]$ 
and Theorem \ref{theo:Wellposendess}, we have 
\begin{align*}
\| & Z^{n+1} \|_{\cS^2([0,T])}^2
\\
& 
\leq 
C\Big(1+\|\xi\|_{L^2(\Omega)}^2 
            + \int_0^T \bE[|Z^{n}_r|^2]dr\Big)
\\
&
\leq 
C\Big(1+\|\xi\|_{L^2(\Omega)}^2 
       + \int_0^T \|Z^{n}\|_{\cS^2([0,r])}^2 dr\Big)
\\
&
\leq 
C\Big(1+\|\xi\|_{L^2(\Omega)}^2 + \int_0^T \Big\{
C(1+\|\xi\|_{L^2(\Omega)}^2 + \int_0^r \|Z^{n-1}\|_{\cS^2([0,s])}^2ds\Big\} dr\Big)
\\
&
\leq \cdots 
\leq C\big(1+\|\xi\|_{L^2(\Omega)}^2\big)\Big( \sum_{j=0}^{n} \frac{(CT)^j}{j!} \Big) \|Z^{0}\|_{\cS^2([0,T])}^2 
\\
& \hspace{1cm}
\leq  C\big(1+\|\xi\|_{L^2(\Omega)}^2\big)e^{CT} \|Z^{0}\|_{\cS^2([0,T])}^2,
\end{align*}
where we iterated the initial estimate on $[0,T]$ over small subintervals $[0,r]$ leading to a known \emph{simplex} estimate. We conclude that 
\begin{align}
 \label{auxeq:unifBoundineqforXn-Malldiff}
\sup_{n\geq 0} \big\{ \|Z^n\|_{\cS^2([0,T])}+\sup_{0\leq t\leq T} W_2(\delta_0,\mu^n_t)\big\} <\infty.
\end{align}
\emph{Step 1.2. Convergence of $Z^n$.} 
Recall that \eqref{eq:StylizedDecoupledMVSDE} is a standard SDE, thus standard SDE stability estimation arguments apply. 
We sketch such argument only and invite the reader to inspect the proof of \cite{DosReis2018-LDPforMVSDEs}*{Proposition 3.3} for the full details. 
Take the SDE for the difference of $Z^{n+1}-Z^n$, i.e. 
\begin{align*}
Z^{n+1}_{t} -Z^{n}_{t} 
=  
\int_0^T \big[ b(s, & Z^{n+1}_{s}, \mu^n_s)
-
b(s,Z^{n}_{t}, \mu^{n-1}_s)\big]\dd s
\\
& +
\sum_{l=1}^m\int_0^T \big[
\sigma^l(s,Z^{n+1}_{s}, \mu^n_s)
-\sigma^l(s,Z^{n}_{t}, \mu^{n-1}_s)\big]\dd W_{s}^l.
\end{align*}
Applying It\^o's formula to $|Z^{n+1}_{t} -Z^{n}_{t}|^2$, using the growth assumptions on $b,\sigma$ and taking the supremum over time and expectations, we use the BDG inequality and then the Gr\"onwall inequality to obtain  
\begin{align*}
\|Z^{n+1}&-Z^n\|_{\cS^2([0,T])}^2
\\
&\leq C_T \big( \int_0^T W_2(\mu^n_r,\mu^{n-1}_r)^2dr\big)
\leq C_T \int_0^T \bE[|Z^{n}_r-Z^{n-1}_r|^2]] dr
\\
&\leq C_T \int_0^T \| Z^{n}-Z^{n-1}\|^2_{\cS^2([0,r])} dr
\leq \cdots\leq 
\frac{(C_T T)^n}{n!} \| Z^{1}-Z^{0}\|^2_{\cS^2([0,T])},
\end{align*}
where we used the same \emph{simplex} trick as in \textit{Step 1.1} above. We conclude that $Z^n$ converges to the solution of \eqref{Eq:General MVSDE} in $\cS^2([0,T])$ as $\| Z^{1}-Z^{0}\|^2_{\cS^2([0,T])}$ is bounded and independent of $n$.
 \medskip 

\emph{Step 2. Malliavin differentiability for $Z^n$.} Now, under our Assumption \ref{Assumption:MalCalcMV}, \cite{imkeller2018differentiability}*{Corollary 3.5} holds applied to yield the Malliavin differentiability of $Z^n$ for each fixed $n$; critically, due to the equation's coefficients being deterministic, that corollary does not require $p>2$ and it holds for any $p\geq 2$ (in particular, our case here for the time being $p=2$). 
We have that the Malliavin Derivative $D Z^{n+1}$ satisfies $D_s Z^{n+1}_{t}=0$ for $0\leq t<s\leq T$, while for $0\leq s\leq t\leq T$ it is given by the SDE dynamics 
\begin{align*}
D_s Z^{n+1}_t 
= 
\sigma(s, Z^{n+1}_s, \mu^{n}_s) 
& +\int_s^t (\nabla_x b)(r, Z^{n+1}_r, \mu^{n}_r) D_s Z^{n+1}_r dr 
\\
&+ \sum_{l=1}^m\int_s^t (\nabla_x \sigma^l)(r, Z^{n+1}_r, \mu^{n}_r) D_s Z^{n+1}_r dW_r^l.
\end{align*}
\smallskip 

\emph{Step 3. Uniform bound on $D Z^n$.} 
We remark that the SDE for $D Z^n$ is linear and satisfies \cite{imkeller2018differentiability}*{Assumption 2.4} with their $b(s,\omega),\sigma(s,\omega)$ set to zero, hence \cite{imkeller2018differentiability}*{Theorem 2.5} applies to yield 
\begin{align}
\nonumber 
\|D_\cdot Z^{n} \|_{\cS^2([0,T])}^2
= 
\bE\big[\, &\sup_{0\leq t\leq T}|D_\cdot Z^n_t|^2\, \big]
\leq 
C \bE\big[\, 
|\sigma(\cdot, Z^{n}_\cdot,\mu_\cdot^{n-1})|^2\, \big]
\\
\nonumber 
& \hspace{-1cm}
\leq 
C\Big(1+\|Z^n\|_{\cS^2([0,T])}^2 + W_2^2(\mu^{n-1}_\cdot,\delta_0) \Big)
\\
\nonumber 
& \hspace{-1cm}
\leq 
C\Big(1+\|Z^n\|_{\cS^2([0,T])}^2 +\|Z^{n-1}\|_{\cS^p([0,T])}^2\Big )
\\
\label{eq:auc:yetanotherrandomlabel}
& \hspace{-1cm}
\leq C(1+\|\xi\|^2_{L^2(\Omega)})
 <\infty,
\end{align}
where we used the linear growth of $\sigma$ and its time continuity property. The constant $C$ depends heavily on the constants appearing in Assumption \ref{ass:MKSDE-MainExistTheo} and the upper bound on $t \mapsto \sigma(t,0,\delta_0)$ stemming from its continuity over the compact  $[0,T]$.  
Using that \eqref{auxeq:unifBoundineqforXn-Malldiff} provides an estimate of $\|Z^n\|_{\cS^2([0,T])}$ uniform over $n$ and uniform over the Malliavin derivative parameter, we conclude taking supremum that    
\begin{align}
\label{eq:aux: unifrom estimates}
    \sup_{n\in\bN} \sup_{s\in[0,T]}\bE[\, \sup_{t\in[0,T]}  |D_s Z^n_t|^2 \,  ] 
    \leq C(1+\|\xi\|^2_{L^2(\Omega)})
    <\infty.
\end{align}

\color{black}
We also establish the inequality for $DZ^n_t$ that yields its interpretation as Hilbert space valued random variable. 
Simple manipulations and using \eqref{eq:aux: unifrom estimates} above easily yields estimate \eqref{eq:aux:HilbertSpaceDominatoin} when $p=2$ (when we pass to the limit in $n \to \infty$).  Concretely, we have   
\begin{align}
\notag 
\bE\Big[ \sup_{t\in [0,T]} \int_0^T |D_s Z_t^n|^2 ds \Big]
&
\leq
C \bE\Big[  \int_0^T  \big\{\sup_{t\in [0,T]} |D_s Z_t^n|^2  \big\}  ds  \Big]
\\ \notag
& 
\leq C 
\int_0^T \bE\Big[ \sup_{t\in [0,T]} |D_s Z_t^n|^2  \Big] ds
\\ \notag
& 
\leq C 
\sup_{s\in [0,T]} \bE\Big[ \sup_{t\in [0,T]} |D_s Z_t^n|^2 \Big]
\\ \label{eq:aux:Hilbert}
& 
   \leq C(1+\|\xi\|^p_{L^2(\Omega)})
    <\infty, 
\end{align}
where we firstly used Jensen's inequality and moved the supremum inside the integral, then used Fubini and dominated the integral by the supremum over the integration variable in a way that  \eqref{eq:auc:yetanotherrandomlabel} can be used. 
Taking supremum over $n$ on both sides and using \eqref{eq:aux: unifrom estimates} yields \eqref{eq:aux:HilbertSpaceDominatoin}.
\color{black}

By applying \cite{nualart2006malliavin}*{Lemma 1.2.3}, we conclude that the limit of $Z$ is Malliavin differentiable and the limit of $DZ^n$ gives its Malliavin derivative (identified by Equation \eqref{eq:MVSDEMallDeriv}).  
The moment estimates for $DZ^n$, holding uniformly over $n$, yield the moment estimate for $DZ$.
\medskip

\color{black}
\emph{Step 4. Higher-order moments on $D Z$ and conclusion for $\bD^{1,p}(\cS^p)$.} 

Recall that $p\geq 2$. From Theorem \ref{theo:Wellposendess} we have that $Z\in \cS^p([0,T])$ and the estimates 
\begin{align*}
\|Z\|_{\cS^p([0,T])}^p 
\leq C\big(1+\|\xi\|_{L^p(\Omega)}^p \big)
\quad \textrm{and}\quad 
\sup_{0\leq t\leq T } W_2^2(\mu_t,\delta_0) \leq C\big(1+\|\xi\|_{L^2(\Omega)}^2 \big).
\end{align*}
Noticing now that \eqref{eq:MVSDEMallDeriv} is a standard linear SDE (in $DZ$) with random (time-continuous) coefficients satisfying a one-sided Lipschitz condition, i.e. the SDE for $D Z$ is linear and satisfies \cite{imkeller2018differentiability}*{Assumption 2.4} with their $b(s,\omega),\sigma(s,\omega)$ set to zero, hence the moment estimate of \cite{imkeller2018differentiability}*{Theorem 2.5} applies with general $p\geq 2$ and yields 
\begin{align*} 
\nonumber 
\|D_\cdot Z \|_{\cS^p([0,T])}^p
& =
\bE\big[\, \sup_{0\leq t\leq T}|D_\cdot Z^n_t|^p\, \big]
\leq 
C \bE\big[\,  |\sigma(\cdot, Z_\cdot,\mu_\cdot)|^p\, \big]
\\
\nonumber 
& 
\leq 
C\Big(1+\|Z^n\|_{\cS^p([0,T])}^p + \sup_{0\leq t\leq T} W_2^p(\mu_t,\delta_0) \Big)
\\
& 
\leq C(1+\|\xi\|^p_{L^p(\Omega)})
 <\infty. 
\end{align*}
Thus, akin to Estimate \eqref{eq:aux: unifrom estimates}, we obtain from the above  inequality 
\begin{align}
\sup_{s\in[0,T]}\bE[\, \sup_{t\in[0,T]}  |D_s Z_t|^p \,  ] 
=
\sup_{s\in[0,T]} \|D_s Z \|_{\cS^p([0,T])}^p
\leq C(1+\|\xi\|^p_{L^p(\Omega)}).
\label{eq:auc:yetanotherrandomlabel-2ndtimearound}
\end{align}
Further, following the footsteps of \eqref{eq:aux:Hilbert} but this time in $p$-moment norms we have
\begin{align*}
\notag
\bE\Big[ \sup_{t\in [0,T]} \Big(\int_0^T |D_s Z_t|^2 ds\Big)^{\frac p2} \Big]
&
\leq
C \bE\Big[  \int_0^T  \big\{\sup_{t\in [0,T]} |D_s Z_t|^p  \big\}  ds  \Big]
\\ \notag
& 
\leq C 
\int_0^T \bE\Big[ \sup_{t\in [0,T]} |D_s Z_t|^p  \Big] ds
\\ \notag
& 
\leq C 
\sup_{s\in [0,T]} \bE\Big[ \sup_{t\in [0,T]} |D_s Z_t|^p \Big]
\\
& 
   \leq C(1+\|\xi\|^p_{L^p(\Omega)})
    <\infty, 
\end{align*}
where we firstly used Jensen's inequality and moved the supremum inside the integral, then used Fubini and dominated the integral by the supremum over the integration variable in a way that  \eqref{eq:auc:yetanotherrandomlabel-2ndtimearound} can be used. This shows \eqref{eq:aux:HilbertSpaceDominatoin}. 

Having shown that $Z\in \cS^p([0,T])$ and that $DZ$ has higher-order $p$-moments (in the sense of \eqref{eq:auc:yetanotherrandomlabel-2ndtimearound} and \eqref{eq:aux:HilbertSpaceDominatoin}) we conclude via \cite{nualart2006malliavin}*{Proposition 1.5.5} that $Z\in \bD^{1,p}(\cS^p)$ (and space interpolation that $Z\in \bD^{1,q}(\cS^q)$ for any $q\in [2,p]$).

\color{black}

\end{proof}

\section{Malliavin differentiability via the interacting particle system}
\label{Sec: IPS under Lipschitz}

The main goal of this section is to explore, in a didactic fashion, how much of  Theorem \ref{proposition:MallDiff-all-around} can be recovered under the interacting particle system approach. A somewhat close approach has been taken, for instance, in \cite{haji2021simple} and \cite{chen2024improved}. 

To simplify arguments we will work under the further restriction of full Lipschitz conditions on the MV-SDE's coefficients and thus abdicate the more general super-linear growth and one-sided Lipschitz assumption.

\subsection{The interacting and non-interacting particle system} 
We introduce the interacting particle system (IPS) associated to McKean-Vlasov SDE \eqref{Eq:General MVSDE}. Consider the system of SDEs for $ 
i=1,\cdots, N$:
\begin{align}
\label{eq:n+1PartSys-Xns}
    d X^{i}_t 
    = 
    b(t,X^{i}_t,\bar{\mu}^N_t)dt 
    + \sum_{l=1}^m\sigma^l(t,X^{i}_t,\bar{\mu}^N_t)dW^{l,i}_t,
    \quad 
    X^{i}_0=\xi^i,
\end{align}
where $\bar{\mu}^N_t(dy)=\frac1N \sum_{k=1}^N \delta_{X^i_t}(dy)$ and for  $l =1,\ldots,m$, $\{W^{l,i}\}_{i = 1,\cdots,N}$, are independent 1-dimensional Brownian motions and $\{\xi^i\}_{i = 1,\cdots,N}$ are i.i.d.~copies of $\xi$; the $(W^{l,i},\xi^i)_i$ in \eqref{eq:n+1PartSys-Xns} are independent of $W^l,\xi$ in \eqref{Eq:General MVSDE} (and in fact, live in different probability spaces). We write $W^i = (W^{1,i},\ldots, W^{m,i})$ to be the corresponding $m-$dimensional Brownian motions for $i=1,\ldots,N$. The dependence on the empirical distributions in the coefficients introduces non-linearity into the system in the form of self-interaction; hence we refer to the above set of equations as an \emph{interacting particle system} (IPS). 

Since \eqref{Eq:General MVSDE} and \eqref{eq:n+1PartSys-Xns} live in different probability spaces, we construct an auxiliary \emph{non-interacting particle system} (non-IPS) as living in the same probability space as \eqref{eq:n+1PartSys-Xns}. For $i=1,\cdots, N$ ,
\begin{align}
\label{eq:n+1nonintPartSys-Xns}
    d Z^i_t 
    = 
    b(t,Z^i_t,\mu^i_t)dt 
    +\sum_{l=1}^m \sigma^l(t,Z^i_t,\mu^i_t)dW^{l,i}_t,
    \qquad 
    Z^i_0=\xi^i, 
\end{align}
where $\mu^i$ is defined as the law of $Z^i$. In this case, the $\{Z^i\}_{i = 1,\cdots,N}$ are independent of each other, since the $(W^i,\xi^i)_i$ are all i.i.d.  and $\mu^i = \mu^j =\mu $ $\forall \,i,j = 1,\cdots, N$ where $\mu$ denotes the law of the McKean-Vlasov SDE \eqref{Eq:General MVSDE}. In essence, \eqref{eq:n+1nonintPartSys-Xns} is a decoupled system of $N$ copies of \eqref{Eq:General MVSDE}. 
From direct inspection of \eqref{eq:n+1nonintPartSys-Xns}, we have the following lemma regarding the cross-Malliavin derivatives $D^j Z^i$ for $i\neq j$.
\begin{lemma}
\label{lemma:djzi}
Assume \eqref{eq:n+1nonintPartSys-Xns} is well-posed. 
Then, the cross-Malliavin derivatives of the solution $\{Z^i\}_{i = 1,\cdots,N}$ to \eqref{eq:n+1nonintPartSys-Xns} are all zero. That is, 
\begin{align*}
    D^j_s Z^i_t=0
    \quad  \textrm{ for any }
    \quad j\neq i, 1\leq i,j\leq N, \quad s,t\in [0,T]. 
\end{align*}    
\end{lemma}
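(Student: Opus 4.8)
The plan is to exploit the fact that, in the non-interacting system \eqref{eq:n+1nonintPartSys-Xns}, the law $\mu^i_t$ is a genuinely \emph{deterministic} object. Indeed, by construction each $Z^i$ is an identically distributed copy of the solution to \eqref{Eq:General MVSDE}, so $\mu^i_t = \mu_t$ is the (non-random) marginal law of the McKean-Vlasov solution. Substituting this into \eqref{eq:n+1nonintPartSys-Xns}, each equation decouples into a \emph{standard} SDE driven by the single Brownian motion $W^i$, with deterministic time-dependent coefficients $(t,x)\mapsto b(t,x,\mu_t)$ and $(t,x)\mapsto\sigma(t,x,\mu_t)$ and initial datum $\xi^i$. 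Consequently $Z^i_t$ is measurable with respect to the sub-$\sigma$-algebra generated by $(\xi^i, (W^i_r)_{r\leq t})$ alone.

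First I would record the measurability-based argument, which is the conceptual heart. On the product Wiener space carrying the independent Brownian motions $\{W^k\}_{k=1}^N$, the Malliavin derivative splits as $D=(D^1,\dots,D^N)$, with $D^j$ differentiating in the $W^j$-direction only. Since $\xi^i\in\cF_0$ is independent of the noise and $W^i$ is independent of $W^j$ for $j\neq i$, the data determining $Z^i$ is independent of the direction $W^j$. A random variable measurable with respect to a sub-$\sigma$-algebra independent of $W^j$ has vanishing $D^j$-derivative; hence $D^j_s Z^i_t=0$ for $j\neq i$, which is the claim.

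To make this concrete within the SDE framework used elsewhere in the paper, I would also verify it at the level of the derivative equation. Applying $D^j_s$ with $j\neq i$ to \eqref{eq:n+1nonintPartSys-Xns} and using that (i) $D^j_s\xi^i=0$, (ii) the deterministic measure term $\mu_t$ contributes no Malliavin derivative, and (iii) differentiating the $W^i$-stochastic integral against the \emph{independent} noise $W^j$ produces no boundary term, one obtains the homogeneous linear SDE
\begin{align*}
D^j_s Z^i_t
= \int_s^t \nabla_x b(r,Z^i_r,\mu_r)\,D^j_s Z^i_r\,dr
+ \int_s^t \nabla_x \sigma(r,Z^i_r,\mu_r)\,D^j_s Z^i_r\,dW^i_r,
\end{align*}
with zero initial condition at $r=s$. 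Since the coefficients $\nabla_x b$ and $\nabla_x\sigma$ are bounded under the Lipschitz assumptions of this section, this linear SDE has a unique solution, which must be the trivial one $D^j_s Z^i_t\equiv 0$.

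The hard part will be point (iii): the clean bookkeeping of \emph{why} the boundary term is absent for $j\neq i$. When $j=i$, Malliavin-differentiating $\int_0^t\sigma(r,Z^i_r,\mu_r)\,dW^i_r$ produces the familiar term $\sigma(s,Z^i_s,\mu_s)$, and this is precisely the contribution that forces $D^i_s Z^i_t$ to be non-trivial. For $j\neq i$, the independence of $W^i$ and $W^j$ removes this contribution, leaving a purely homogeneous equation. All remaining ingredients -- the vanishing of the measure derivative (immediate from determinism of $\mu_t$) and of $D^j_s\xi^i$ (immediate from $\cF_0$-independence of the noise) -- are routine, so the measurability argument of the second paragraph is really the cleanest route, with the derivative-SDE computation serving as an explicit cross-check.
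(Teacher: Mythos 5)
Your proposal is correct and its conceptual core — $Z^i$ is measurable with respect to $\sigma(\xi^i,(W^i_r)_{r\le t})$, which is independent of $W^j$ for $j\neq i$, so the $W^j$-directional Malliavin derivative vanishes by definition — is exactly the paper's (one-line) proof. The additional derivative-SDE cross-check is a harmless elaboration (note it quietly invokes differentiability of the coefficients, which the lemma itself does not assume), but the measurability argument you correctly identify as the cleanest route is all the paper uses.
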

\begin{proof}
    One sees that $Z^i$ acts independently of any other Brownian motion $W^j$ for $j\neq i$. The result follows immediately from the definition of the Malliavin derivative.
\end{proof}
\subsubsection{Preliminaries}
 In this section, we work under stronger assumptions requiring $b$ and $\sigma$ to be globally space-measure Lipschitz. Formally we state the framework as follows:

\begin{assumption}
\label{ass:MKSDE-MainExistTheo2}
Let $b:[0,T] \times \bR^d \times\cP_2(\bR^d) \to \bR^d$ and for $l=1,\ldots,m$, $\sigma^l:[0,T] \times \bR^d \times \cP_2(\bR^d) \to \bR^{d}$ be progressively measurable deterministic maps and $\exists L>0$ such that: 
\begin{enumerate}
\item For some $p\geq 2$, $\xi^i \in L^p(\Omega, \cF_0, \bP; \bR^d)$ for $i=1,\cdots,N$, 
\item $\sigma^l$ is continuous in time and Lipschitz in space-measure $\forall t \in[0,T]$, $\forall x, x'\in \bR^d$ and $\forall \mu, \mu' \in \mathcal{P}_2(\bR^d)$ we have
\begin{align*}
    |\sigma^l(t,  x, \mu) - \sigma^l(t , x', \mu')| \leq L\Big( |x-x'| +W_2(\mu, \mu')\Big).
\end{align*}
\item $b$ is continuous in time and Lipschitz in space-measure $\forall t \in[0,T]$, $\forall x, x'\in \bR^d$ and $\forall \mu, \mu' \in \mathcal{P}_2(\bR^d)$ we have
\begin{align*}
    |b(t,  x, \mu) - b(t , x', \mu')| \leq L\Big( |x-x'| +W_2(\mu, \mu')\Big).
\end{align*}
\end{enumerate}
\end{assumption}


A quick inspection of \eqref{eq:n+1PartSys-Xns} highlights that this system of equations can be seen as a system in $(\bR^d)^N$ as opposed to $N$ equations valued in $\bR^d$. The former has a few advantages and to formalize it we introduce the notion of  an \textit{empirical projection} introduced in \cite{book:CarmonaDelarue2018a}*{Definition 5.34}.
\begin{definition}[Empirical projection of a map]
\label{def:Auxiliary-uN-for-EmpirialTrick}
Given $u: \cP_2(\bR^d) \to \bR^d$ and $N\in \bN$, define the empirical projection $u^N$ of $u$ via $u^N: (\bR^d)^N \to \bR^d$, such that
\begin{align*}
    u^N(x^1,\dots, x^N) := u \big(\bar{\mu}^N\big),
\quad \text{with}\quad
\bar{\mu}^N (\dd y) := \frac{1}{N}\sum\limits_{l=1}^N \delta_{x^l}\, (\dd y),
\end{align*}
for $x^l\in \bR^d, l=1,\dots,N$. \color{black}
\end{definition}

We can use a similar notion of mapping points onto empirical projections to express the interacting particle system as a high-dimensional SDE. 
Hence we can interpret system \eqref{eq:n+1PartSys-Xns} as a system in $(\bR^d)^N$ with $B^N:[0,T]\times (\bR^d)^N\to (\bR^d)^N$ and $\Sigma^N:[0,T]\times (\bR^d)^N\to \bR^{mN\times dN}$\color{black}
,  
\begin{align}
\label{eq:VeryBigSDE}
d\mathbf{X}_t=B^N(t,\mathbf{X}_t)dt+\Sigma^N(t,\mathbf{X}_t)d\mathbf{W}_t, \quad \mathbf{X}_0=\boldsymbol \xi,
\end{align}
for $\mathbf{X}=(X^1,\cdots,X^N)$, $\mathbf{W}=(W^1,\cdots,W^N)$ and $\boldsymbol{\xi}=(\xi^1,\cdots,\xi^N)$ where for $t\in[0,T]$ and  $\mathbf{x} = (x^1,\cdots, x^N) \in \bR^{dN}$, $x^i\in\bR^d$, $i=1,\cdots,N$ we have 
\begin{align*}
    B^N(t,{\bf x})
    &=\big( b^N_1(t,{\bf x}),\cdots,b^N_N(t,{\bf x}) \big)
    \\
    &
    :=\big( b(t,x^1,\bar{\mu}^N),\cdots,b(t,x^N,\bar{\mu}^N) \big)
    \\
    \Sigma^N(t,{\bf x}) 
    & =\textrm{diag}\big(\sigma^N_1(t,{\bf x}),\cdots,\sigma^N_N(t,{\bf x})\big)
    \\
    & 
    :=\textrm{diag}\big(\sigma(t,x^1,\bar{\mu}^N),\cdots,\sigma(t,x^N,\bar{\mu}^N)\big),
\end{align*}
with the relation between ${\bf x}$ and $\bar{\mu}^N$ being as highlighted in Definition \ref{def:Auxiliary-uN-for-EmpirialTrick}. The next result shows that from the Lipschitz properties of $b,\sigma$ in space and measure, one can show that ${\bf x}\mapsto B^N(\cdot,{\bf x}),\Sigma^N(\cdot, {\bf x})$ are uniformly Lipschitz (uniformly in time).
\begin{lemma}
\label{lemma:PropertiesofBandSigma2}
Under Assumption \ref{ass:MKSDE-MainExistTheo}, the maps $B^N$ and $\Sigma^N$ in \eqref{eq:VeryBigSDE} are globally Lipschitz in their spatial variables. 
\end{lemma}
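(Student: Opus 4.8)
The plan is to reduce the Lipschitz estimate for the high-dimensional maps $b^N$ and $\sigma^N$ to the space-measure Lipschitz bounds on $b$ and $\sigma$ furnished by Assumption \ref{ass:MKSDE-MainExistTheo2}, using the empirical Wasserstein estimate \eqref{eq:wassersteinempirical} to convert the measure increment into a spatial one. First I would observe that both target norms decompose as sums over the $N$ blocks: since $b^N(t,\mathbf{x})=\big(b(t,x^i,\bar{\mu}^N(\mathbf{x}))\big)_{i=1}^N\in(\bR^d)^N$ and $\sigma^N(t,\mathbf{x})$ is block-diagonal with $i$-th diagonal block $\sigma(t,x^i,\bar{\mu}^N(\mathbf{x}))$ (by $\sigma^N_{ij}=\1_{i=j}\sigma(t,x^i,\bar{\mu}^N(\mathbf{x}))$), the (Hilbert–Schmidt) squared norm of each difference equals $\sum_{i=1}^N$ of the squared increment of the corresponding $i$-th component.

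For a fixed $t$ and points $\mathbf{x}=(x^1,\dots,x^N)$, $\mathbf{y}=(y^1,\dots,y^N)$ in $(\bR^d)^N$, I would bound each component increment via the space-measure Lipschitz property, e.g.
\[
|b(t,x^i,\bar{\mu}^N(\mathbf{x}))-b(t,y^i,\bar{\mu}^N(\mathbf{y}))|\le L\big(|x^i-y^i|+W_2(\bar{\mu}^N(\mathbf{x}),\bar{\mu}^N(\mathbf{y}))\big),
\]
and then invoke \eqref{eq:wassersteinempirical} to replace the Wasserstein term by $N^{-1/2}|\mathbf{x}-\mathbf{y}|$. The identical argument applies verbatim to $\sigma$, since the Lipschitz hypothesis in Assumption \ref{ass:MKSDE-MainExistTheo2} has the same space-measure form for both coefficients.

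Then I would square, use $(a+b)^2\le 2a^2+2b^2$, and sum over $i$. The crucial point — and essentially the only thing worth checking — is the cancellation of $N$: summing the term $N^{-1}|\mathbf{x}-\mathbf{y}|^2$ over the $N$ indices returns exactly $|\mathbf{x}-\mathbf{y}|^2$, so that
\[
\sum_{i=1}^N\big(|x^i-y^i|+N^{-1/2}|\mathbf{x}-\mathbf{y}|\big)^2\le 2\sum_{i=1}^N|x^i-y^i|^2+2\sum_{i=1}^N N^{-1}|\mathbf{x}-\mathbf{y}|^2=4|\mathbf{x}-\mathbf{y}|^2,
\]
whence $|b^N(t,\mathbf{x})-b^N(t,\mathbf{y})|\le 2L|\mathbf{x}-\mathbf{y}|$ and likewise $|\sigma^N(t,\mathbf{x})-\sigma^N(t,\mathbf{y})|\le 2L|\mathbf{x}-\mathbf{y}|$. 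There is no genuine analytic obstacle here; the real content of the lemma is precisely that the $N^{-1/2}$ scaling in \eqref{eq:wassersteinempirical} is exactly what makes the resulting Lipschitz constant independent of the number of particles $N$. I would therefore emphasize this $N$-uniformity in the write-up, as it is the property actually exploited downstream to obtain particle-number-uniform well-posedness and moment estimates.
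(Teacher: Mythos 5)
Your proposal is correct and follows essentially the same route as the paper: decompose the squared norm into the $N$ component increments, apply the space-measure Lipschitz bound, use $(a+b)^2\le 2a^2+2b^2$, and let the $N^{-1}$ factor from \eqref{eq:wassersteinempirical} cancel against the sum over $N$ components to get the $N$-independent constant $2L$. Your explicit remark on the $N$-uniformity of the Lipschitz constant is a worthwhile emphasis, but the argument itself is the same.
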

\begin{proof}
Let $\mathbf{x} = (x^1,\cdots, x^N),\mathbf{y} = (y^1,\cdots, y^N)\in \bR^{dN}$, for $x^i,y^i\in\bR^d$, $i=1,\cdots,N$ then
\begin{align*}
|B^ N(t,
{\bf x})
& -B^N(t,{\bf y})|^2
=
\sum_{k=1}^N 
\big| b(t,x^k,\bar{\mu}^N({\bf x})-b(t,y^k,\bar{\mu}^N({\bf y}) \big|^2
\\
&
\leq
L^2\sum_{k=1}^N
\Big(|x^k-y^k| +W_2(\bar{\mu}^N({\bf x}),\bar{\mu}^N({\bf y})\Big)^2
\\
&
\leq
2L^2\sum_{k=1}^N
|x^k-y^k|^2 +W_2^2(\bar{\mu}^N({\bf x}),\bar{\mu}^N({\bf y}))
\leq
4L^2|{\bf x}-{\bf y}|^2, 
\end{align*}
    with the final inequality arising from the fact
    \begin{align}
    \label{eq:wassersteinempirical}
        W_2\left(\frac{1}{N}\sum_{i=1}^N \delta_{x^i},\frac{1}{N}\sum_{i=1}^N \delta_{y^i}\right)\leq \left(\frac{1}{N}\sum_{i=1}^N|x^i-y^i|^2\right)^{\frac{1}{2}} 
        = \frac{1}{\sqrt{N}}|\mathbf{x}-\mathbf{y}|.
    \end{align}
    The proof is similar for $\bR^{dN} \ni {\mathbf x} \mapsto \Sigma^N({\mathbf x})\in \bR^{mN\times dN}$. 
\end{proof}

\subsubsection{Classical results}

We briefly recall classical results involving the relationship between systems described above. Well-posedness follows from classic literature, while the second result is classically known as Propagation of Chaos (PoC), which ascertains convergence of $X^i$ to $Z^i$ as $N\to \infty$ (as respective laws) \cite{Sznitman1991}.
\begin{proposition}[Well-posedness and Propagation of Chaos]
\label{lemma:PoCforBSigmaSystem}
Let Assumption \ref{ass:MKSDE-MainExistTheo2} hold. Then, the solutions to the systems \eqref{eq:n+1PartSys-Xns} and \eqref{eq:n+1nonintPartSys-Xns}, given by $\{X_t^{i}\}_{i=1,\cdots, N}$ and $\{Z_t^i\}_{i=1,\cdots, N}$ respectively are well-posed, unique and square integrable. 
It holds that
\begin{align}
\label{IPSbound}
\sup_{N\in\bN} \,\max_{1\leq i\leq N} 
\Big\{\bE\big[\sup_{0\leq t \leq T} |Z^i_t|^2\big] 
+
\bE\big[\sup_{0\leq t \leq T} |X^i_t|^2\big]  \Big\}
\leq C (1+\bE [|\xi^\cdot |^2 ])e^{CT}<\infty,
\end{align}
where the involved constant $C$ depends on $d,m,L$ and the quantity
\begin{align*}
    \int_0^T |b(t, 0, \delta_0)| dt + \int_0^T |\sigma(t, 0, \delta_0)|^2 dt,
\end{align*}
but independent of $N$. Moreover, we have for any $i=1,\cdots,N$, 
    \begin{align}
    \label{eq:poc}
    \lim_{N\to \infty} \sup_{0\leq t\leq T} \bE[W_2^2(\mu^i_t,\bar \mu^N_t)] =0
    \hspace{0.2cm}  \textrm{and} 
        \lim_{N\to \infty}\max_{1\leq i\leq N}
        \bE \Big[\,\sup_{0\leq t\leq T}\left|X_t^{i}-Z_t^i\right|^2 \Big] = 0.    
    \end{align}
\end{proposition}

\begin{proof}
Under Lipschitz conditions this result is classical. 
Well-posedness of \eqref{eq:n+1nonintPartSys-Xns} follows directly from Theorem \ref{theo:Wellposendess} as it is a non-interacting particle system (thus well-posedness of the initial McKean-Vlasov equation suffices). 

As for system \eqref{eq:n+1PartSys-Xns}, Lemma \ref{lemma:PropertiesofBandSigma2} ensures the coefficients are uniformly Lipschitz (as maps in $(\bR^d)^N$) and thus well-posedness (for fixed $N$) follows from general SDE theory \cite{gyongykrylov1980SDEMartingales}*{Theorem 1}. 
One can conclude the uniform in $N$ estimates of \eqref{IPSbound} for $\{X^i\}_i$ by mimicking the arguments used in the proof of Theorem \ref{proposition:MallDiff-all-around}. 

The convergence results of \eqref{eq:poc} follows from \cite{carmona2015LecNotesBook}*{Lemma 1.9 and Theorem 1.10}. 
\end{proof}

\subsubsection{A primer on Lions derivatives}
\label{sec:MeasureDerivatives}

To consider the calculus for the mean-field setting, one requires to build a suitable differentiation operator on $2$-Wasserstein space. Among the numerous notions of differentiability of a function $u$ defined over the $\cP_2(\bR^d)$, we try to follow the approach introduced by Lions in his lectures at Coll\`ege de France. A comprehensive collection of recent results was done in the joint monographs of Carmona and Delarue \cite{book:CarmonaDelarue2018a}, \cite{book:CarmonaDelarue2018b}. In line with the construction we assume our probability space to be an atomless Polish space \cite{book:CarmonaDelarue2018a}*{Chapter 5}.

We consider a canonical lifting of the function $u:\cP_2(\bR^d) \to \bR^d$ to $\tilde{u}: L^2(\Omega,\cF,\bP;\bR^d) \ni X \to \tilde u (X) = u(Law(X)) \in \bR^d$. We can say that $u$ is L-differentiable at $\mu$, if $\tilde u$ is Fréchet differentiable at some $X$, such that $\mu = \bP \circ X^{(-1)}$. Denoting the gradient by $D\tilde u$ and using a Hilbert structure of the $L^2$ space, we can identify $D\tilde u$ as an element of $L^2$. It has been shown that $D\tilde u$ is a $\sigma(X)$-measurable random variable and given by the function $Du(\mu)( \cdot) : \bR^d \to \bR^d $, depending on the law of $X$ and satisfying $Du (\mu)( \cdot) \in L^2(\bR^d, \cB(\bR^d),\mu; \bR^d)$. Hereinafter the L-derivative of $u$ at $\mu$ is the map $\partial_\mu u(\mu)(\cdot): \bR^d \ni v \to \partial_\mu u(\mu)(v) \in \bR^d$, satisfying $D\tilde u(X) = \partial_\mu u(\mu)(X)$.
We always denote $\partial_\mu u$ as the version of the L-derivative that is continuous in product topology of all components of $u$. 

Definition \ref{def:Auxiliary-uN-for-EmpirialTrick} relates the spatial derivatives of $u^N$ with the Lions derivative of the measure function $u$. Such is stated next; see also \cite{book:CarmonaDelarue2018a}*{Proposition 5.35 (p.399)}.  
\begin{proposition}
\label{prop:DerivativeRelations-Space-2-Lions}
Let $u: \cP_2(\bR^d) \to \bR^d$ be a continuously $L$-differentiable map, then, for any $N>1$, the empirical projection $u^N$ is differentiable in $(\bR^d)^N$ and for all $x^1,\cdots,x^N\in\bR^d$ we have the following relation:
\begin{align*}
    \partial_{x^j}u^N(x^1, \dots, x^N) &= \frac{1}{N}\: \partial_\mu u\Big(\frac{1}{N}\sum_{l=1}^N \delta_{x^l}\Big)(x^j).
\end{align*}
\end{proposition}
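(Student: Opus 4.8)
The plan is to exploit the very definition of the $L$-derivative through its Hilbert-space lifting, choosing a random variable whose law is exactly the empirical measure $\bar\mu^N = \frac1N\sum_{l=1}^N\delta_{x^l}$ and whose perturbation in a single coordinate $x^j$ is localized on an event of probability $1/N$. This localization is precisely what will produce the factor $1/N$ in the claimed identity.

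First I would fix the points $x^1,\dots,x^N \in \bR^d$ and, on the atomless twin space $(\tilde\Omega,\tilde\cF,\tilde\bP)$ from Section~\ref{sec:MeasureDerivatives}, introduce a random index $\vartheta$ with $\tilde\bP(\vartheta = l) = 1/N$ for $l=1,\dots,N$, together with the random variable $X := \sum_{l=1}^N x^l\,\1_{\{\vartheta=l\}}$. By construction $\Law(X) = \bar\mu^N$, so that the lift $\tilde u$ satisfies $\tilde u(X) = u(\bar\mu^N) = u^N(x^1,\dots,x^N)$. The key observation is that perturbing only the $j$-th point, $x^j \mapsto x^j + y$ with $y\in\bR^d$ and all other points held fixed, corresponds on the lifted side to replacing $X$ by $X + Y$ with $Y := y\,\1_{\{\vartheta=j\}}$, and hence $\tilde u(X+Y) = u^N(x^1,\dots,x^j+y,\dots,x^N)$.

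Next I would invoke Fr\'echet differentiability of $\tilde u$ at $X$, which by the definition of the $L$-derivative recalled above gives $D\tilde u(X) = \partial_\mu u(\bar\mu^N, X)$ together with the first-order expansion $\tilde u(X+Y) = \tilde u(X) + \tilde{\bE}[\langle \partial_\mu u(\bar\mu^N, X), Y\rangle] + o(\|Y\|_{L^2})$. Since $Y$ is supported on $\{\vartheta = j\}$, where $X = x^j$, the linear term collapses to $\tilde{\bE}[\langle \partial_\mu u(\bar\mu^N, x^j), y\rangle\,\1_{\{\vartheta=j\}}] = \frac1N\langle \partial_\mu u(\bar\mu^N, x^j), y\rangle$; moreover $\|Y\|_{L^2} = |y|/\sqrt N$, so for fixed $N$ the remainder is $o(|y|)$ as $y\to 0$. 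Reading off the coefficient of $y$ then yields $\partial_{x^j} u^N(x^1,\dots,x^N) = \frac1N\partial_\mu u(\bar\mu^N, x^j)$, which is the assertion; differentiability of $u^N$ in $x^j$ follows because the expansion is linear in $y$, and joint continuity of the gradient follows from the assumed continuity of $\partial_\mu u$.

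The main obstacle is not the computation but the careful bookkeeping between the two representations: one must ensure that the representation $D\tilde u(X) = \partial_\mu u(\Law(X), X)$ is legitimately applied to the discretely-supported variable $X$ (it is, since the identity holds for every square-integrable lift of $\mu$, independently of the choice of $X$), and that the $L^2$-scale $\|Y\|_{L^2} = |y|/\sqrt N$ of the perturbation does not corrupt the identification of the Euclidean partial derivative. Once these two points are settled, collapsing the expectation onto the event $\{\vartheta = j\}$ is immediate and the factor $1/N$ emerges automatically from $\tilde\bP(\vartheta=j)$.
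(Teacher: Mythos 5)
Your argument is correct and is essentially the standard proof of this identity: the paper itself offers no proof, citing \cite{book:CarmonaDelarue2018a}*{Proposition 5.35} directly, and the proof given there proceeds exactly as you do --- lifting via a uniform random index $\vartheta$ so that $X=x^{\vartheta}$ has law $\bar\mu^N$, perturbing on the event $\{\vartheta=j\}$ of probability $1/N$, and reading the factor $1/N$ off the Fr\'echet expansion. Your two flagged checkpoints (that $D\tilde u(X)=\partial_\mu u(\mathrm{Law}(X),X)$ holds for any square-integrable lift on the atomless space, and that $\|Y\|_{L^2}=|y|/\sqrt N$ still gives an $o(|y|)$ remainder for fixed $N$) are precisely the right ones, and the continuity of $\partial_\mu u$ justifies upgrading the partial derivatives to genuine differentiability of $u^N$.
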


\subsection{Exploring Malliavin differentiability via interacting particle system limits}
\label{sec:Mall diff under diff assumptions}

The novelty in this section lies not within the results, as these are implied directly by those in our Section \ref{section:Mall Diff superlinear growth} or \cite{Crisan2018}, but in the proof methodology via limits of interacting particle systems. 

\begin{assumption}
\label{Assump:All is Lipschitz } 
Let Assumption \ref{ass:MKSDE-MainExistTheo2} hold. 
\begin{enumerate}
    \item The functions $b,\sigma^l$, $l=1,\ldots,m$ are continuously differentiable in their spatial variables and their spatial derivative maps are continuous in time. 
     Further, the maps
    $(\nabla_x b)$, $(\nabla_x \sigma^l)(t, x, \mu)$ are uniformly bounded for $l=1,\ldots,m$ (over all variables $(t,x,\mu) \in [0,T] \times\bR^d \times \cP_2(\bR^d)$).
    
    \item  For any $t\in[0,T]$ the maps $\mu\mapsto b(t,x,\mu)$ and $\mu\mapsto \sigma^l(t,x,\mu)$, $l=1,\ldots,m$, are $\bP$-a.s.~continuous in topology, induced by the Wasserstein metric and $L$-differentiable $\bP$-a.s. at every $\mu \in \cP_2(\bR^d).$ Moreover, $\partial_\mu b(t,x,\mu)(v)$ and $\partial_\mu \sigma^l(t,x,\mu)(v)$ have $\mu$-versions such that $\partial_\mu b(t,x,\mu)(v)$ and $\partial_\mu \sigma^l(t,x,\mu)(v)$ are $\bP$-a.s.~joint-continuous at every quadruple $(t,x,\mu,v)$ with $(t,x,\mu) \in [0,T] \times\bR^d \times \cP_2(\bR^d),~v \in \Supp(\mu)$ and uniformly bounded (in all variables). \color{black}
\end{enumerate}

\end{assumption}

Focusing on the technique of limits of particle systems, we establish the following result that covers the Malliavin differentiability of the particle system, the Propagation of Chaos result and how to transfer the Malliavin regularity to the limiting McKean-Vlasov SDE.
\begin{proposition}
\label{malliavinIPS}
Let Assumption \ref{Assump:All is Lipschitz } hold.
Then the solution $\{Z^i\}_{i = 1,\cdots,N}$ to \eqref{eq:n+1nonintPartSys-Xns} and the solution $\{X^i\}_{i = 1,\cdots,N}$ to \eqref{eq:n+1PartSys-Xns} are Malliavin differentiable with Malliavin derivatives $\{D^j Z^i\}_{i,j=1,\cdots,N}$ and $\{D^j X^i\}_{i,j=1,\cdots,N}$ respectively. \\

For $\{D^j Z^i\}_{i,j=1,\cdots,N}$ we have that:
\begin{itemize}
    \item  For any $j\neq i$, $1\leq i,j\leq N$ $s,t\in [0,T]$ that $D^j_s Z^i_t=0$ (Lemma \ref{lemma:djzi}).
    \item  When $j=i$ then $D^i Z^i$ satisfies for $0\leq s\leq t \leq T$
\begin{align}
\label{Zmd}
D^i_s Z^i_t
= \sigma(s,Z^i_s, \mu^i_s) 
&
+ \int_s^t( \nabla_x b)(r, Z^i_r, \mu^i_r) D^i_s Z^i_r dr \notag
\\
&
+ \sum_{l=1}^m\int_s^t (\nabla_x \sigma^l)(r,Z^i_r, \mu^i_r) D^i_s Z^i_r dW^{l,i}_r.
\end{align}
If $s >t$ then $D^i_s Z^i_t=0$ $\bP$-almost surely. 

\item Moreover, for some $C>0$ dependent on $T$ and $L$ but not on $N$, (c.f.~Theorem \ref{proposition:MallDiff-all-around})
\begin{align*}
\sup_{0\leq s\leq T} \|D^i_s Z^i \|_{\cS^2([0,T])}^2
&
\leq 
\sup_{0\leq s\leq T} \bE\Big[\sup_{0\leq t \leq T}|D^i_s Z^i_t|^2\Big]
\\
&
\leq C(1+\|Z^i\|_{\cS^2([0,T])}^2)
\leq C(1+\|\xi^\cdot\|_{L^2(\Omega)}^2)
 <\infty.
\end{align*}
\end{itemize}

For $\{D^j X^i\}_{i,j=1,\cdots,N}$, the Malliavin derivative of \eqref{eq:n+1PartSys-Xns}, we have that: 
\begin{itemize}
    \item If $s >t $ then $D^j_s X^i_t=0$ $\bP$-almost surely for any $i,j=1,\cdots, N$.
    \item If $0 \leq s\leq t $ then $D^j_s X^i_t$ satisfies
    \begin{align}
\label{malderiv}
    D_s^j \notag
    X_t^{i} 
    &= \sigma
    (s,X_s^{i},\bar{\mu}_s^N)  \1_{i=j} 
    \\
    &
    +\int_{s}^{t} \bigg\{(\nabla_x b)(r,X_r^{i},\bar{\mu}_r^N)D_s^jX_r^{i} \bigg.
    \\
    &
    \hspace{2cm}+\left. \frac{1}{N}\sum_{k=1}^{N}(\partial_\mu b)(r,X_r^{i},\bar{\mu}_r^N)(X^{k}_r)D_s^j X_r^{k} \right\}\, dr \notag
    \\
    \notag 
    &
    +\sum_{l=1}^m\int_{s}^{t} \bigg\{(\nabla_x \sigma^l)(r,X_r^i,\bar{\mu}_r^N)D_s^jX_r^{i} \bigg.
    \\
    &
    \left.
    \hspace{2cm}
    +\frac{1}{N}\sum_{k=1}^{N}(\partial_\mu \sigma^l)(r,X_r^{i},\bar{\mu}_r^N)(X^{k}_r)D_s^j X_r^{k} \right\}\, dW_r^i.
\end{align}
    
    \item Moreover, there exists a constant $C>0$ depending on $T$ and $L$ but not on $N$ such that
\begin{align*}
        \sup_{0\leq s\leq T}  
        \bE \left[\,
         \sup_{0 \leq t\leq T} |D_s^j X_t^{i}|^2 \, \right] 
        &
        \leq C 
        \Big( 
        \1_{i=j}
        + \frac{1}{N}   
               \Big).
\end{align*}\color{black}
\end{itemize}

Finally, for any $s,t \in [0,T]$, 
    \begin{itemize}
        \item  $D^j_s X^i_t \to 0$ as $N\to \infty$ for $j\neq i$ in  $L^2(\Omega)$ and almost surely, and
        \item  $D^i_s X^i_t \to D^i_s Z^i_t $ as $N\to \infty$ in $L^2(\Omega)$.
    \end{itemize} 
\end{proposition}
\begin{proof}
Rewrite the IPS \eqref{eq:n+1PartSys-Xns} in integral form: for $i = 1,\ldots,N$,
\begin{align}
\label{eq:SDE}
X_t^{i} 
&
= \xi^i + \int_{0}^{t}b(r,X_r^{i},\bar{\mu}_r^N) dr + \sum_{l=1}^m\int_{0}^{t}\sigma^l (r,X_r^{i},\bar{\mu}_r^N) dW_r^{l,i}, 
\\ \notag
\label{eq:SDE-Empirical Projs}
&
= \xi^i 
+ \int_{0}^{t} b^N_i(r,X_r^{1},\cdots, X_r^{N}) dr 
\\
& \hspace{3cm}
+ \sum_{l=1}^m\int_{0}^{t}\sigma^{l,N}_i (r,X_r^{1},\cdots, X_r^{N}) dW_r^{l,i}, 
\end{align}
where \eqref{eq:SDE-Empirical Projs} uses the empirical projection representation (Definition \ref{def:Auxiliary-uN-for-EmpirialTrick}) of the coefficients in \eqref{eq:SDE}; that is  $b^N_i(t,X^1,\cdots,X^N):= b(t,X^{i},\bar{\mu}^N) $ and similarly for $\sigma^{l,N}_i$. 
In view of Assumption \ref{Assump:All is Lipschitz }, it is easy to conclude that the coefficients $b_i^N,\sigma_i^{l,N}$ of \eqref{eq:SDE} are Lipschitz continuous (via Lemma \ref{lemma:PropertiesofBandSigma2}) and also differentiable in their variables (via Proposition \ref{prop:DerivativeRelations-Space-2-Lions}). From the results in \cite{nualart2006malliavin} or \cite{imkeller2018differentiability} for classical SDEs we conclude immediately that the  Malliavin derivatives of $X^i$ exist, are unique and are square-integrable. Furthermore, if $s >t $ then $D^j_s X^i_t=0$ $\bP$-a.s. for any $i,j=1,\cdots, N$.

From application of the chain rule, the Malliavin derivative is written for $0\leq s\leq t\leq T < \infty$ as 
\begin{align*}
    D_s^j X_t^{i} 
    = 
    \sigma^N_i(s,X^1_s,\cdots,X^N_s)  \1_{i=j}
    & 
    +\int_s^t 
    \sum_{k=1}^N( \partial_{x^k}  b^N_i)(r,X^1_r,\cdots,X^N_r)D^j_s X_r^k \dd r 
    \notag
    \\  
    & \hspace{-1.4cm}
    +\sum_{l=1}^m\int_s^t 
    \sum_{k=1}^N (\partial_{x^k}  \sigma^{l,N}_i)(r,X^1_r,\cdots,X^N_r)D^j_s X_r^k  
    \dd W_r^{l,i} .
\end{align*}
Exploiting Proposition \ref{prop:DerivativeRelations-Space-2-Lions} and reverting the empirical projection maps to their original form, we rewrite this as 
\begin{align*}
D_s^j \notag
    X_t^{i} 
    &= \sigma
    (s,X_s^{i},\bar{\mu}_s^N)  \1_{i=j} 
    \\
    &
    +\int_{s}^{t} \bigg\{(\nabla_x b)(r,X_r^{i},\bar{\mu}_r^N)D_s^jX_r^{i} \bigg.
    \\
    &
    \hspace{2cm}+\left. \frac{1}{N}\sum_{k=1}^{N}(\partial_\mu b)(r,X_r^{i},\bar{\mu}_r^N)(X^{k}_r)D_s^j X_r^{k} \right\}\, dr \notag
    \\
    \notag 
    &
    +\sum_{l=1}^m\int_{s}^{t} \bigg\{(\nabla_x \sigma^l)(r,X_r^i,\bar{\mu}_r^N)D_s^jX_r^{i} \bigg.
    \\
    &
    \left.
    \hspace{2cm}
    +\frac{1}{N}\sum_{k=1}^{N}(\partial_\mu \sigma^l)(r,X_r^{i},\bar{\mu}_r^N)(X^{k}_r)D_s^j X_r^{k} \right\}\, dW_r^i.
\end{align*}
noting that the derivatives $\partial_{x^i} b^N_i$ and $\partial_{x^i} \sigma^{l,N}_i$ produce two components as opposed to the one produced by the cross-terms $\partial_{x^k} b^N_i$ and $\partial_{x^k} \sigma^{l,N}_i$, $k\neq i$. \\

\textit{Step 1. Preliminary manipulations.} 
Applying Itô's formula to \eqref{malderiv},
\begin{align}
\label{eq:itomalderiv}
|&D_s^j 
X_t^{i}|^2=
|\sigma(s,X_s^{i},\bar{\mu}_s^N)|^2  \1_{i=j} 
\\
&
+
\int_{s}^{t}2D_s^j X_r^{i}:\bigg\{(\nabla_x b )(r,X_r^{i},\bar{\mu}_r^N)D_s^jX_r^{i} \bigg.\\
& \hspace{3cm}
+\bigg.\frac{1}{N}\sum_{k=1}^{N}(\partial_\mu b)(r,X_r^{i},\bar{\mu}_r^N)(X^{k}_r)D_s^j X_r^{k}\bigg\} 
\notag \\
\notag
&
+ \sum_{l=1}^m\Big| (\nabla_x \sigma^l)(r,X_r^{i},\bar{\mu}_r^N)D_s^jX_r^{i} +\frac{1}{N}\sum_{k=1}^{N}(\partial_\mu \sigma^l)(r,X_r^{i},\bar{\mu}_r^N)(X_r^{k})D_s^j X_r^{k} \Big| ^2 \, dr 
\\
&
+ M_N(s,t), 
\end{align}
where $M_N$ is a Hilbert space-valued local martingale term. In fact, by standard SDE well-posedness theory, e.g. \cite{gyongykrylov1980SDEMartingales}*{Theorem 1}, for any $s \in [0,T]$, a $L^2$-integrable solution to Equation \eqref{eq:itomalderiv} exists, allowing us to conclude $M_N$ is a proper martingale for finite $N$.\footnote{We note that \cite{gyongykrylov1980SDEMartingales}*{Theorem 1} provides wellposedness and moment estimates for the SDE, but critically, such estimates are \textit{dependent on $N$} and explode as $N\nearrow +\infty$. Nonetheless, for any fixed $N$ the estimates suffice to ensure that $\bE[M_N(s,t)]=0$ for any $s,t$. Obtaining moment estimates uniformly in $N$ is done in a subsequent step of the proof. This is the exact same as in the proof of Proposition \ref{lemma:PoCforBSigmaSystem}.}.

Let $L$ be the Lipschitz constant which bounds the spatial and measure derivatives of $b$ and $\sigma^l$, $l=1,\ldots,m$. Temporarily fix $s \in [0,T]$. We have
\begin{align}
\label{expectationitomalderiv}
  \bE\big[& \sup_{0\leq t \leq T}\big|D_s^j  X_t^{i} \big|^2\big] 
  \nonumber
  \\
    & \leq 
     \bE\big[|
    \sigma(s,X_s^{i},\bar{\mu}_s^N)  \1_{i=j}|^2\big] +\bE[\sup_{0\leq t \leq T}|M_N(t,s)|] \notag
    \\
    &
    +
    \bE\bigg[ \sup_{0\leq t \leq T}\int_s^t 2L |D_s^j X_r^{i}|^2 + 
     2mL^2 |D_s^j X_r^{i}|^2
     \notag 
    \\
    & 
    \hspace{2cm}+\frac{2L|D_s^j X_r^{i}|}{N}\sum_{k=1}^{N}|D_s^j X_r^{k}| + 2mL^2\Big|\frac{1}{N}\sum_{k=1}^{N}|D_s^j X_r^{k}| \Big|^2 \, \dd r \bigg] \notag
\\
&
    \leq 
     \bE\big[|\sigma(s,X_s^{i},\bar{\mu}_s^N)  \1_{i=j}|^2\big] 
     + \kappa \bE\bigg[\sup_{0\leq t \leq T} 
     \int_s^t 
     |D_s^j X_r^{i}|^2 
     + \frac{1}{N}\sum_{k=1}^{N} |D_s^j X_r^{k}|^2 
     \dd r
 \bigg], 
\end{align}
for some $\kappa >0$, by applying the Young and Jensen inequalities, where we used the BDG inequality to control the martingale term. Further, one can bound $\sup_{0\leq s\leq T}\bE[\,|\sigma(s,X_s^{i},\bar{\mu}_s^N)|^2]$ uniformly in $N$, justified by well-posedness and the uniform in $N$ moment bounds of the SDE-IPS (and also using the continuity assumption on $t\mapsto \sigma(t,0,\delta_0)$). That is, by linear growth of $\sigma$, properties of the Wasserstein metric and Proposition \ref{lemma:PoCforBSigmaSystem}, 
\begin{align}
\label{upperboundsigma}
\notag 
\sup_{0\leq s\leq T}\bE\big[ \, 
& |\sigma(s,X_s^{i}, \bar{\mu}_s^N)|^2\big]
\\ 
& \leq 
C 
\Big(  1 + \sup_{0\leq s\leq T}\bE\big[\,  |X_s^{i}|^2 \big] + \frac1N \sum_{k=1}^N\sup_{0\leq s\leq T} \bE\big[\, |X_s^{k}|^2 \big] \Big) \notag
\\ 
& 
\leq  
{C} (1+\bE [|\xi^\cdot |^2 ])e^{{C}T} =: \alpha,
\end{align}
for some $C>0$ independent of $N$, observing that the $\{\xi^j\}_j$ are i.i.d.  
\\

\textit{Step 2. Controlling the empirical mean of the Malliavin derivative.} We now aim to gain control over the quantity $\frac{1}{N}\sum_{i=1}^{N}D_s^jX_r^{i}$.  Averaging \eqref{expectationitomalderiv} over the index $i$ and once more using that the $\{\xi^j\}_j$ are i.i.d., we have 
\begin{align}
    \notag 
    \bE\Big[\frac{1}{N}\sum_{i=1}^{N}
    & 
    \sup_{0\leq t\leq T}|D_s^j X_t^{i}|^2\Big ] 
    \\
    \notag 
    & \leq \frac{\alpha}{N} 
    +
    \bE\Bigg[ \frac{\kappa}{N}\sum_{i=1}^N 
    \int_s^T\Big\{|D_s^j X_r^{i}|^2 +\frac{1}{N}\sum_{m=1}^N |D_s^j X_r^{m}|^2 \Big\}\dd r\Bigg] \notag
    \\
    &
    \label{eq: exp sup mal deriv square}
    \leq
    \frac{\alpha}{N}
    +   2\kappa\bE  \Bigg[\frac{1}{N}\int_s^T\sum_{i=1}^N|D_s^j X_r^i|^2\dd r\Bigg].  
   \end{align} 
Taking the supremum inside the expectation means we are not able to apply Grönwall's inequality directly. However, we have that
 \begin{align*}
     \bE\left[\frac{1}{N}\sum_{i=1}^{N}|D_s^j X_r^{i}|^2\right] 
     \leq 
     \frac{\alpha}{N}+2\kappa\bE\Bigg[\frac{1}{N}\int_s^r\sum_{i=1}^N|D_s^j X_\rho^i|^2\dd \rho\Bigg].
 \end{align*}
Applying Grönwall's inequality yields 
\begin{align} 
\label{eq:aux mean moment estimate v1}
   \bE\left[\frac{1}{N}\sum_{i=1}^{N}|D_s^j X_r^{i}|^2\right]
    & \leq
    \frac{\alpha}{N} \sup_{0\leq r,s\leq T}\left(1+ e^{2\kappa(r-s)}(e^{-2\kappa s}-e^{-2\kappa r})\right)
    \leq  
    \frac{\alpha}{N}e^{2\kappa T}. 
\end{align}
Hence, substituting \eqref{eq:aux mean moment estimate v1} back into \eqref{eq: exp sup mal deriv square}, we get
\begin{align*}
    \sup_{0\leq s\leq T}\bE\left[\frac{1}{N}\sum_{i=1}^{N}\sup_{0\leq t\leq T}|D_s^j X_t^{i}|^2\right] \leq \frac{2\alpha}{N}(1+\kappa Te^{2\kappa T}):=\Psi_N.
\end{align*}
It is immediate to see that $\Psi_N$ is uniformly bounded over $N$, hence
\begin{align*}
\lim_{N\rightarrow\infty} \sup_{0\leq s \leq T}\bE\left[ \sup_{0\leq t \leq T}\frac{1}{N} \sum_{i=1}^{N}|D_s^j X_t^{i}|^2\right] = 0
\qquad \textrm{for any }\quad j=1,\cdots,N.
\end{align*} 
Using Jensen's inequality and \eqref{eq:aux mean moment estimate v1}, observe that
\begin{align}
\label{eq:convergenceofmeanmallderiv}
\sup_{0\leq s \leq T}\bE \Bigg[ \sup_{0\leq t \leq T}\frac{1}{N}&\sum_{i=1}^N |D_s^j X_t^{i}|^2\Bigg]
   \leq \Psi_N \to 0 \quad \textrm{as } N\to+\infty
   \\
   & \Rightarrow \quad \notag
   \lim_{N\rightarrow\infty }\frac{1}{N}\sum_{i=1}^{N}|D_s^j X_t^{k}| = 0 \,\,\bP\textrm{-a.s.} 
\end{align} 
\color{black}
\textit{Step 3. Convergence of the individual terms.} Injecting  the bound $\Psi_N$ into \eqref{expectationitomalderiv}, we obtain 
\begin{align*}
    \sup_{0\leq s \leq T}\bE\left[ \sup_{0\leq t \leq T}\big|D_s^j X_t^{i} \big|^2\right] \nonumber
     \leq
     \alpha\1_{i=j} 
     + \kappa T \Psi_N + \kappa\int_0^T \bE\big[ \big|D_s^j X_t^{i} \big|^2\big] dr.
\end{align*}
Applying Grönwall's inequality again, we bound 
\begin{align*}
    \bE \left[ |D_s^j X_t^{i}|^2\right]   
    &
    \leq\bigg( \alpha\1_{i=j}+\kappa T  \Psi_N \bigg)\notag
    +\kappa \int_s^t e^{\kappa(t-s-r)}\bigg( \alpha\1_{i=j}+ \kappa T\Psi_N \bigg) dr\notag
    \\
    & \Rightarrow 
  \bE \big [\, |D_s^j X_t^{i}|^2 \, \big] \leq e^{\kappa T}\bigg( \alpha\1_{i=j}+ \kappa T\Psi_N \bigg).
\end{align*}

Hence,
\begin{align}
\label{eq:gronwallindividualparticle}
    \sup_{0\leq s \leq T}\bE \left[ \sup_{0\leq t \leq T}|D_s^j X_t^{i}|^2\right]   
    &
    \leq\bigg( \alpha\1_{i=j}+\kappa T  \Psi_N \bigg)\bigg(1+ \kappa Te^{\kappa T}\bigg).
\end{align}
Hence, applying an identical argument to Equation \eqref{eq:aux:Hilbert}
we have:
\begin{align*}
    \sup_{N\in \bN}\max_{1\leq i,j\leq N}\bE\left[\sup_{0\leq t \leq T}\int_0^T|D_s^j X_t^{i}|^2 \dd s\right]<\infty.
\end{align*}
In particular, we have that $X_t^{i} \in \bD^{1,2}(\cS^2)$ uniformly in $N$.
 
 Aside, we obtain that $D_s^j X_t^{i} \rightarrow 0$ in $\bD^{1,2}(\cS^2) $ for $i\neq j$ as the size of interacting particle system  $N \rightarrow \infty $. This is in line with Lemma \ref{lemma:djzi}, since referring back to our non-IPS analogy, particles essentially become more conditionally independent as the particle system size gets larger.
 
 We can now apply \cite{nualart2006malliavin}*{Lemma 1.2.3}. Let $\{Z^i\}_{i=1,\cdots,N}$ denote the solution to the non-IPS \eqref{eq:n+1nonintPartSys-Xns}. By Proposition \ref{lemma:PoCforBSigmaSystem}, we have  
\begin{align*}
   \lim_{N\to \infty}\max_{1\leq i\leq N}
        \bE \Big[\,\sup_{0\leq t\leq T}\left|X_t^{i}-Z_t^i\right|^2 \Big] = 0.
\end{align*}
    
The established uniform in $N$ upper bound on $\bE[|D_s^j X_r^{i}|^2]$, given by  \eqref{eq:gronwallindividualparticle}, allows us to conclude that  $Z^i\in \bD^{1,2}(\cS^2)$ and $D^j_s Z^i_t = \lim_{N\to\infty}D^j_s X^{i}_t$ as prescribed by \cite{nualart2006malliavin}*{Lemma 1.2.3}. 
\medskip

\textit{Step 4. Recovering the limiting equation and conclusion.} 
It remains only to identify and confirm the stochastic differential equation the limiting object of $D^j_s X^{i}_t$ as $N\to \infty$ satisfies. 

We now consider the $L^2$-limit of the right hand side of \eqref{malderiv}.  First, note that the MV-SDE \eqref{Zmd} is an affine SDE with random coefficients that are space-measure Lipschitz, hence existence and uniqueness follows from \cite{mao2008stochastic}*{Theorem 2.1}.
Define $\delta\sigma^l_s := \sigma^l(s,X_s^{i},\bar{\mu}_s^N)-\sigma^l(s,Z_s^i,\mu_s^i)$, $l=1,\ldots,m$ and $\delta\sigma_s$ to be the $d \times m$ matrix valued process with columns $\delta\sigma^l_s$. By our Lipschitz assumption on $\sigma^l$, for all $s\in[0,T]$:
\begin{align}
\label{deltasigmaPOC}
    |\delta\sigma_s|\leq L\Big(|X_s^{i}-Z_s^i|+W_2(\bar{\mu}_s^N,\mu_s^i)\Big) \to 0,  
\end{align} as $N\to \infty$ in $L^2(\Omega)$ by Proposition \ref{lemma:PoCforBSigmaSystem}. 
Computing the difference between the SDEs \eqref{Zmd} and \eqref{malderiv},
\begin{align*}
    D_s^j & X_t^i  
    - D_s^jZ_t^i 
    = \delta\sigma_s
    +\int_s^t \theta_r \dd r 
    + \int_s^t \Big\{ \frac{1}{N}\sum_{k=1}^{N}(\partial_\mu b)(r,X_r^{i},\bar{\mu}_r^N)(X^{k}_r)D_s^j X_r^{k} \Big\} \dd r
    \\
    &
    +\sum_{l=1}^m\int_s^t \eta_r^l \dd W_r^{l,i} 
    + \sum_{l=1}^m \int_s^t  \Big\{ \frac{1}{N}\sum_{k=1}^{N}(\partial_\mu \sigma^l)(r,X_r^{i},\bar{\mu}_r^N)(X^{k}_r)D_s^j X_r^{k} \Big\} \dd W_r^{l,i}, 
\end{align*}
where
\begin{align*}
    \theta_r &:= (\nabla_x b)(r,X_r^i,\bar{\mu}^N_r)D_s^jX_r^i - (\nabla_x b )(r,Z_r^i,\mu_r^i)D_s^j Z_r^i , 
\\
    \eta_r^l &:= (\nabla_x \sigma^l)(r,X_r^i,\bar{\mu}^N_r)D_s^jX_r^i - (\nabla_x \sigma^l) (r,Z_r^i,\mu_r^i)D_s^j Z_r^i.
\end{align*}
Squaring both sides of the equation, expanding the squares and taking expectations, we have 
\begin{align}
\label{differencemalderiv}
    \lVert D_s^jX_t^i-D_s^jZ_t^i\rVert_{L^2(\Omega)}^2 -\Phi_N =  \Big\lVert\int_s^t \theta_r \dd r\Big\rVert_{L^2(\Omega)}^2 + \Big\lVert\sum_{l=1}^m\int_s^t \eta_r^l \dd W_r^{l,i} \Big \rVert_{L^2(\Omega)}^2, 
\end{align}
where we define
\begin{align*}
    \Phi_N 
    :=\bE\Big[|\delta\sigma_s&
    |^2\Big]+2\bE
    \Big[\delta\sigma_s : \int_s^t \theta_r \dd r\Big]
    \\
    &
    +2\bE\Big[\delta\sigma_s : \int_s^t\frac{1}{N}\sum_{k=1}^{N}(\partial_\mu b)(r,X_r^{i},\bar{\mu}_r^N)(X^{k}_r)D_s^j X_r^{k} \dd r\Big]
    \\
    &
+2\bE\Big[\int_s^t \theta_r \dd r :\int_s^t\frac{1}{N}\sum_{k=1}^{N}(\partial_\mu b)(r,X_r^{i},\bar{\mu}_r^N)(X^{k}_r)D_s^j X_r^{k} \dd r\Big] 
\\
&
+ 2\bE\Big[\sum_{l=1}^m\int_s^t \eta_r^l\dd r : \int_s^t\frac{1}{N}\sum_{k=1}^{N}(\partial_\mu \sigma^l)(r,X_r^{i},\bar{\mu}_r^N)(X^{k}_r)D_s^j X_r^{k} \dd r\Big]
\\
&
+\bE\Big[\Big|\int_s^t\frac{1}{N}\sum_{k=1}^{N}(\partial_\mu b)(r,X_r^{i},\bar{\mu}_r^N)(X^{k}_r)D_s^j X_r^{k} \dd r\Big|^2\Big]
\\
&+\bE\Big[\sum_{l=1}^m\int_s^t\Big|\frac{1}{N}\sum_{k=1}^{N}(\partial_\mu \sigma^l)(r,X_r^{i},\bar{\mu}_r^N)(X^{k}_r)D_s^j X_r^{k}\Big|^2\dd r\Big].
\end{align*}
The term $\Phi_N$ is a sequence which converges to zero as $N \to\infty$. This is justified by repeated use of the Cauchy-Schwarz and Jensen inequalities, over the bounds \eqref{eq:convergenceofmeanmallderiv} and \eqref{deltasigmaPOC}; the $L^2(\Omega)$ boundedness of $\theta_r$ and $\eta_r$ is justified by the Lipschitz property of $\sigma^l$ and $b$ (i.e. $ \nabla_x \sigma^l$ and $\nabla_x b$ are uniformly bounded maps). 
The remaining term on the left hand side of \eqref{differencemalderiv} converges to zero by the conclusion of \cite{nualart2006malliavin}*{Lemma 1.2.3} (shown in Step 3 of this proof). Hence so must the right hand side of \eqref{differencemalderiv} and we obtain that 
\begin{align*}
\lim_{N\to\infty}\int_{s}^{t} (\nabla_x b)(r,X_r^{i},\bar{\mu}_r^N)D_s^jX_r^{i}\, dr = \int_{s}^{t}(\nabla_x b)(r,Z^i_r,\mu_r^i)D_s^j Z^i_r \, dr, 
\end{align*} 
and 
\begin{align*}
\lim_{N\to\infty}\int_{s}^{t} (\nabla_x \sigma^l)(r,X_r^{i},\bar{\mu}_r^N)D_s^jX_r^{i}\, \dd W_r^i = \int_{s}^{t}(\nabla_x \sigma^l)(r,Z^i_r,\mu_r^i)D_s^j Z^i_r \dd W_r^{l,i} .
\end{align*} 
in the $L^2$-sense and the it follows that MV-SDE \eqref{Zmd} is identified as the limit of \eqref{malderiv}.

\end{proof}

\begin{remark}[Mollification: Lifting Assumption \ref{Assump:All is Lipschitz }(2.); the measure differentiability requirement]

We do not carry out the analysis here, but by drawing on techniques of mollification in Wasserstein spaces it seems possible to remove the measure differentiability assumption of Proposition \ref{malliavinIPS}. 
There are several ways to carry out mollification over the space of measures, with the main difficulty being that the Wasserstein space of measures is an infinite dimensional one. 

\textit{[Inf-Sup convolution]:} 
Motivated by the study of Hamilton-Jacobi equations in infinite dimensional spaces, Lasry-Lions \cite{LasryLions1986-infsupconvolution} propose inf-sup convolution (in Hilbert spaces) to show that bounded uniformly continuous scalar functions defined on a Hilbert space $\mathcal{H}$ can be uniformly approximated by functions belonging to the class of differentiable maps with Lipschitz derivatives. Critically, their methodology preserves, in the infinite dimensional space, certain good properties that other known mollifications at the time were not known to. As an example, in \cite{daudin2023optimal} and working on the torus, the  the inf-sup convolution techniques of \cite{LasryLions1986-infsupconvolution} are used to establish optimal rates for the convergence problem in mean field control. Notably, the mollification procedures of \cite{daudin2023optimal} rely on the properties of the Hilbert Sobolev space $H^{-s}$ for $s>d / 2+1$, i.e., the dual of the Hilbert space of functions with $s$ generalized derivatives in $L^2$.

\textit{[Smoothing via truncating Fourier expansions]:} 
In \cite{cecchin2022weak}, working over the probability measure $\cP(\bT^d)$ on the $d$-dimensional torus $\bT^d$, a mollification procedure of real-valued functions on $\cP(\bT^d)$ based on the Fourier coefficients of the measure is introduced. Very roughly, the mollification is carried out by truncating higher-order coefficients of the Fourier expansion at a conveniently chosen threshold and combining with a Fej\'er kernel (see their Definition 3.13). It is currently open how to extend the analysis from $\bT^d$ to $\bR^d$.

\textit{[Smooth approximation over $L^\infty$ under $W_1$]:}
In \cite{mou2019wellposedness}*{Section 3}, the authors construct a novel mollification technique for measure functionals in $C(\cP_1(\bR^d))$ under the $1$-Wasserstein distance and the approximation is carried in $L^\infty$ (the subset of  bounded RVs with norm $\lVert X \rVert_{L^\infty(\Omega)}=\esssup_{\omega\in\Omega}|X(\omega)|$). 
Critically, their mollified map satisfies uniformly the same Lipschitz property of the original functional. Such property does not hold under $W_2$ (unless the functional is also $W_1$-Lipschitz) but a convergence result is provided (see their Theorem 3.1).

\textit{[Smoothing the approximating particle projection]:}
The interacting particle system point of view allows us to benefit from a finite dimensional framework and draw from  standard mollification arguments (in $(\bR^d)^N$). 
The most suitable regularisation method for our IPS approach has been shown in \cite{chassagneux2014probabilistic} (also \cites{book:CarmonaDelarue2018a,book:CarmonaDelarue2018b} and the complementary \cite{platonovdosreis2023ItowentzelLions} adding all the missing details of the proofs of \cites{book:CarmonaDelarue2018a,book:CarmonaDelarue2018b}). 
There, regularization is applied directly to the empirical projection map $u^N$ (of Definition \ref{def:Auxiliary-uN-for-EmpirialTrick} and not $u$) via convolution with a smooth kernel. The method allows to control the derivatives of the mollified map by the Lipschitz constant. 

In \cite{cosso2023smooth}, the authors close the two open questions left by \cite{mou2019wellposedness}. They show that when $u$ is $W_2$-continuous there exists a sequence $\{u_k\}_k \in C^{\infty}(\mathscr{P}_2(\mathbb{R}^d))$ converging to $u$ uniformly on compact subsets of $\mathscr{P}_2(\mathbb{R}^d)$. If $u$ is additionally uniformly (resp.~Lipschitz) continuous then each $u_k$ is also uniformly (resp.~Lipschitz) continuous, with the same modulus of continuity (resp. Lipschitz constant) as $u$ -- this closes \cite{mou2019wellposedness}*{Remark 3.2(i)}. Moreover, for $u \in C^1(\mathscr{P}_2(\mathbb{R}^d))$ (resp.~$u \in C^2(\mathscr{P}_2(\mathbb{R}^d))$) we show that the convergence also holds for the first-order derivative (resp.~second-order derivatives) -- this closes \cite{mou2019wellposedness}*{Remark 3.2(ii)}.
The smooth approximating sequence $\{u_k\}_k \in C^{\infty}(\mathscr{P}_2(\mathbb{R}^d))$ of \cite{cosso2023smooth} is constructed relying on the empirical distribution, similarly to what is done in \cite{book:CarmonaDelarue2018a}*{Theorem 5.92} in the proof of It\^o's formula along a flow of measures, although there it is not really a smoothing as both $u$ and $u_k$ are of class $C^2(\mathscr{P}_2(\mathbb{R}^d)$), but rather a way to approximate a function $u$ on $\mathscr{P}_2(\mathbb{R}^d)$ by functions defined on finite-dimensional spaces. 

\end{remark}

\section*{Acknowledgement}

G. dos Reis acknowledges partial support from the FCT – Fundação para a Ciência e a Tecnologia, I.P., under the scope of the projects UIDB/00297/2020 (\url{https://doi.org/10.54499/UIDB/00297/2020}) and UIDP/00297/2020 (\url{https://doi.org/10.54499/UIDP/00297/2020}) (Center for Mathematics and Applications, NOVA Math), and by the UK Research and Innovation (UKRI) under the UK government’s Horizon Europe funding Guarantee [grant number UKRI343].






\begin{bibdiv}
\begin{biblist}

\bib{adamssalkeld2022LDPReflected}{article}{
      author={Adams, Daniel},
      author={dos Reis, Gon\c{c}alo},
      author={Ravaille, Romain},
      author={Salkeld, William},
      author={Tugaut, Julian},
       title={Large deviations and exit-times for reflected {M}c{K}ean-{V}lasov equations with self-stabilising terms and superlinear drifts},
        date={2022},
        ISSN={0304-4149,1879-209X},
     journal={Stochastic Process. Appl.},
      volume={146},
       pages={264\ndash 310},
         url={https://doi.org/10.1016/j.spa.2021.12.017},
      review={\MR{4374937}},
}

\bib{Bolley2011}{article}{
      author={Bolley, F.},
      author={Ca{\~{n}}izo, J.~A.},
      author={Carrillo, J.~A.},
       title={Stochastic mean-field limit: non-{L}ipschitz forces and swarming},
        date={2011},
     journal={Math Models Methods Appl Sci},
      volume={21},
}

\bib{BaladronFasoliFaugerasEtAl2012}{article}{
      author={Baladron, Javier},
      author={Fasoli, Diego},
      author={Faugeras, Olivier},
      author={Touboul, Jonathan},
       title={Mean-field description and propagation of chaos in networks of {H}odgkin-{H}uxley and {FitzHugh}-{N}agumo neurons},
        date={2012May},
        ISSN={2190-8567},
     journal={The Journal of Mathematical Neuroscience},
      volume={2},
      number={1},
       pages={10},
         url={https://doi.org/10.1186/2190-8567-2-10},
}

\bib{BossyEtAl2015}{article}{
      author={Bossy, Mireille},
      author={Faugeras, Olivier},
      author={Talay, Denis},
       title={Clarification and complement to ``mean-field description and propagation of chaos in networks of {H}odgkin--{H}uxley and {F}itzhugh--{N}agumo neurons''},
        date={2015},
     journal={The Journal of Mathematical Neuroscience (JMN)},
      volume={5},
      number={1},
       pages={19},
}

\bib{carmona2015LecNotesBook}{book}{
      author={Carmona, Ren\'{e}},
       title={Lectures on {BSDE}s, stochastic control, and stochastic differential games with financial applications},
      series={Financial Mathematics},
   publisher={Society for Industrial and Applied Mathematics (SIAM), Philadelphia, PA},
        date={2016},
      volume={1},
        ISBN={978-1-611974-23-2},
         url={https://doi.org/10.1137/1.9781611974249},
      review={\MR{3629171}},
}

\bib{chassagneux2014probabilistic}{article}{
      author={Chassagneux, Jean-Fran\c{c}ois},
      author={Crisan, Dan},
      author={Delarue, Fran\c{c}ois},
       title={A probabilistic approach to classical solutions of the master equation for large population equilibria},
        date={2022},
        ISSN={0065-9266,1947-6221},
     journal={Mem. Amer. Math. Soc.},
      volume={280},
      number={1379},
       pages={v+123},
         url={https://doi.org/10.1090/memo/1379},
      review={\MR{4493576}},
}

\bib{book:CarmonaDelarue2018a}{book}{
      author={Carmona, Ren{\'{e}}},
      author={Delarue, Fran{\c{c}}ois},
       title={Probabilistic theory of mean field games with applications {I}},
      series={Probability Theory and Stochastic Modelling},
   publisher={Springer-Verlag},
        date={2018},
      volume={83},
}

\bib{book:CarmonaDelarue2018b}{book}{
      author={Carmona, Ren{\'{e}}},
      author={Delarue, Fran{\c{c}}ois},
       title={Probabilistic theory of mean field games with applications {II}},
      series={Probability Theory and Stochastic Modelling},
   publisher={Springer-Verlag},
        date={2018},
      volume={84},
}

\bib{cecchin2022weak}{article}{
      author={Cecchin, Alekos},
      author={Delarue, Fran{\c{c}}ois},
       title={Weak solutions to the master equation of potential mean field games},
        date={2022},
     journal={arXiv preprint arXiv:2204.04315},
}

\bib{MR4413221}{article}{
      author={Chen, Xingyuan},
      author={dos Reis, Gon\c{c}alo},
       title={A flexible split-step scheme for solving {M}c{K}ean-{V}lasov stochastic differential equations},
        date={2022},
        ISSN={0096-3003,1873-5649},
     journal={Appl. Math. Comput.},
      volume={427},
       pages={Paper No. 127180, 23},
         url={https://doi.org/10.1016/j.amc.2022.127180},
      review={\MR{4413221}},
}

\bib{imanumdrad022}{article}{
      author={Chen, Xingyuan},
      author={dos Reis, Gon\c{c}alo},
       title={Euler simulation of interacting particle systems and {M}c{K}ean-{V}lasov {SDE}s with fully super-linear growth drifts in space and interaction},
        date={2024},
        ISSN={0272-4979,1464-3642},
     journal={IMA J. Numer. Anal.},
      volume={44},
      number={2},
       pages={751\ndash 796},
      review={\MR{4727110}},
}

\bib{chen2023wellposedness}{article}{
  author={Chen, Xingyuan},
  author={{dos Reis}, Gon{\c{c}}alo},
  author={Stockinger, Wolfgang},
  title={Wellposedness, exponential ergodicity and numerical approximation of fully super-linear {McK}ean--{V}lasov {SDE}s and associated particle systems},
  journal={Electronic Journal of Probability},
  volume={30},
  pages={1--50},
  date={2025},
  publisher={The Institute of Mathematical Statistics and the Bernoulli Society},
}

\bib{chen2024improved}{article}{
      author={Chen, Xingyuan},
      author={dos Reis, Goncalo},
      author={Stockinger, Wolfgang},
      author={Wilde, Zac},
       title={Improved weak convergence for the long time simulation of mean-field {L}angevin equations},
        date={2024},
     journal={arXiv preprint arXiv:2405.01346},
     note={To appear in Electr. J. of Prob.}
}

\bib{Crisan2018}{article}{
      author={Crisan, Dan},
      author={McMurray, Eamon},
       title={Smoothing properties of {M}c{K}ean-{V}lasov {SDE}s},
        date={2018},
        ISSN={0178-8051,1432-2064},
     journal={Probab. Theory Related Fields},
      volume={171},
      number={1-2},
       pages={97\ndash 148},
         url={https://doi.org/10.1007/s00440-017-0774-0},
      review={\MR{3800831}},
}

\bib{cosso2023smooth}{article}{
      author={Cosso, Andrea},
      author={Martini, Mattia},
       title={On smooth approximations in the {W}asserstein space},
        date={2023},
        ISSN={1083-589X},
     journal={Electron. Commun. Probab.},
      volume={28},
       pages={Paper No. 30, 11},
         url={https://doi.org/10.1214/23-ecp538},
      review={\MR{4627412}},
}

\bib{mou2019wellposedness}{article}{
      author={Chenchen, Mou},
      author={Zhang, Jianfeng},
       title={Wellposedness of second order master equations for mean field games with nonsmooth data},
        date={2021},
     journal={Memoirs of the American Mathematical Society},
}

\bib{daudin2023optimal}{article}{
      author={Daudin, Samuel},
      author={Delarue, Fran{\c{c}}ois},
      author={Jackson, Joe},
       title={On the optimal rate for the convergence problem in mean field control},
        date={2023},
     journal={arXiv preprint arXiv:2305.08423},
}

\bib{DreyerFrizGajewskiEtAl2016}{techreport}{
      author={Dreyer, W.},
      author={Friz, P.K.},
      author={Gajewski, P.},
      author={Guhlke, C.},
      author={Maurelli, M.},
       title={Stochastic model for {LFP}-electrodes},
 institution={WIAS preprint no. 2329},
        date={2016},
}

\bib{dreyer2011phase}{article}{
      author={Dreyer, W.},
      author={Gaber{\v{s}}{\v{c}}ek, M.},
      author={Guhlke, Clemens},
      author={Huth, Robert},
      author={Jamnik, Janko},
       title={Phase transition in a rechargeable lithium battery},
        date={2011},
     journal={European Journal of Applied Mathematics},
      volume={22},
      number={3},
       pages={267\ndash 290},
}

\bib{MR4367675}{article}{
      author={dos Reis, Gon\c{c}alo},
      author={Engelhardt, Stefan},
      author={Smith, Greig},
       title={Simulation of {M}c{K}ean-{V}lasov {SDE}s with super-linear growth},
        date={2022},
        ISSN={0272-4979,1464-3642},
     journal={IMA J. Numer. Anal.},
      volume={42},
      number={1},
       pages={874\ndash 922},
         url={https://doi.org/10.1093/imanum/draa099},
      review={\MR{4367675}},
}

\bib{platonovdosreis2023ItowentzelLions}{article}{
      author={dos Reis, Gon\c{c}alo},
      author={Platonov, Vadim},
       title={It\^{o}-{W}entzell-{L}ions {F}ormula for {M}easure {D}ependent {R}andom {F}ields under {F}ull and {C}onditional {M}easure {F}lows},
        date={2023},
        ISSN={0926-2601,1572-929X},
     journal={Potential Anal.},
      volume={59},
      number={3},
       pages={1313\ndash 1344},
         url={https://doi.org/10.1007/s11118-022-10012-1},
      review={\MR{4647952}},
}

\bib{DosReis2018-LDPforMVSDEs}{article}{
      author={dos Reis, Gon\c{c}alo},
      author={Salkeld, William},
      author={Tugaut, Julian},
       title={Freidlin-{W}entzell {LDP} in path space for {M}c{K}ean-{V}lasov equations and the functional iterated logarithm law},
        date={2019},
        ISSN={1050-5164,2168-8737},
     journal={Ann. Appl. Probab.},
      volume={29},
      number={3},
       pages={1487\ndash 1540},
         url={https://doi.org/10.1214/18-AAP1416},
      review={\MR{3914550}},
}

\bib{gyongykrylov1980SDEMartingales}{article}{
      author={Gy\"{o}ngy, I.},
      author={Krylov, N.~V.},
       title={On stochastic equations with respect to semimartingales. {I}},
        date={1980/81},
        ISSN={0090-9491},
     journal={Stochastics},
      volume={4},
      number={1},
       pages={1\ndash 21},
         url={https://doi.org/10.1080/03610918008833154},
      review={\MR{587426}},
}

\bib{gomes2019mean}{article}{
      author={Gomes, Susana~N.},
      author={Pavliotis, Grigorios~A.},
      author={Vaes, Urbain},
       title={Mean field limits for interacting diffusions with colored noise: phase transitions and spectral numerical methods},
        date={2020},
     journal={Multiscale Modeling \& Simulation},
      volume={18},
      number={3},
       pages={1343\ndash 1370},
}

\bib{haji2021simple}{article}{
      author={Haji-Ali, Abdul-Lateef},
      author={Hoel, H{\aa}kon},
      author={Tempone, Ra{\'u}l},
       title={A simple approach to proving the existence, uniqueness, and strong and weak convergence rates for a broad class of mckean--vlasov equations},
        date={2021},
     journal={arXiv preprint arXiv:2101.00886},
}

\bib{imkeller2018differentiability}{article}{
      author={Imkeller, Peter},
      author={dos Reis, Gon\c{c}alo},
      author={Salkeld, William},
       title={Differentiability of {SDE}s with drifts of super-linear growth},
        date={2019},
        ISSN={1083-6489},
     journal={Electron. J. Probab.},
      volume={24},
       pages={Paper No. 3, 43},
         url={https://doi.org/10.1214/18-EJP261},
      review={\MR{3916323}},
}

\bib{LasryLions1986-infsupconvolution}{article}{
      author={Lasry, J.-M.},
      author={Lions, P.-L.},
       title={A remark on regularization in {H}ilbert spaces},
        date={1986},
        ISSN={0021-2172},
     journal={Israel J. Math.},
      volume={55},
      number={3},
       pages={257\ndash 266},
         url={https://doi.org/10.1007/BF02765025},
      review={\MR{876394}},
}

\bib{mao2008stochastic}{book}{
      author={Mao, Xuerong},
       title={Stochastic differential equations and applications},
   publisher={Horwood},
        date={2008},
}

\bib{nualart2006malliavin}{book}{
      author={Nualart, David},
       title={The {M}alliavin calculus and related topics},
   publisher={Springer},
        date={2006},
      volume={1995},
}

\bib{RenWang2018}{article}{
      author={Ren, Panpan},
      author={Wang, Feng-Yu},
       title={Bismut formula for {L}ions derivative of distribution dependent {SDE}s and applications},
        date={2019},
        ISSN={0022-0396,1090-2732},
     journal={J. Differential Equations},
      volume={267},
      number={8},
       pages={4745\ndash 4777},
         url={https://doi.org/10.1016/j.jde.2019.05.016},
      review={\MR{3983053}},
}

\bib{Sznitman1991}{incollection}{
      author={Sznitman, Alain-Sol},
       title={{Topics in Propagation of Chaos}},
        date={1991},
   booktitle={{\'{E}}cole d'{\'{e}}t{\'{e}} de probabilit{\'{e}}s de saint-flour xix - 1989},
      editor={Hennequin, Paul-Louis},
      series={Lecture Notes in Mathematics},
      volume={1464},
   publisher={Springer-Verlag},
}

\end{biblist}
\end{bibdiv}



\end{document}